\def\PREP{PREP}
\def\ISSAC{ISSAC}
\def\form{PREP}
\theoremstyle{plain}
\newtheorem{theorem}{Theorem}[section]
\newtheorem{lemma}[theorem]{Lemma}
\newtheorem{proposition}[theorem]{Proposition}
\theoremstyle{definition}
\newtheorem{definition}[theorem]{Definition}
\newtheorem{example}[theorem]{Example}
\newtheorem{algorithm}[theorem]{Algorithm}
\numberwithin{equation}{section}
\newcommand{\B}[1]{\mathbb #1}
\newcommand{\C}[1]{\mathcal #1}
\newcommand{\F}[1]{\mathfrak #1}
\newcommand{\alg}[1]{\operatorname{#1}}
\newcommand{\ideal}[1]{\langle #1 \rangle}
\newcommand{\norm}[1]{\| #1 \|_1}
\DeclareMathOperator{\initial}{in}
\DeclareMathOperator{\NF}{NF}
\DeclareMathOperator{\lcm}{lcm}
\DeclareMathOperator{\im}{im}
\newcommand{\Inc}{\operatorname{Inc}(\B N)}
\newcommand{\SymN}{\F S_\infty}
\newcommand{\mon}{M}
\newcommand{\Sym}{\F S_\infty}
\newcommand{\LT}{\initial_{\leq}}
\newcommand{\LTp}{\initial_{\leq'}}
\newcommand{\LC}{\operatorname{LC}}
\newcommand{\mm}{\C M}
\newcommand{\degtot}{\operatorname{deg}_{tot}}
\begin{document}
\title{Equivariant Gr\"obner bases of symmetric toric ideals}
\ifx\form\ISSAC
\numberofauthors{1}
\author{
\alignauthor Robert Krone\\
       \affaddr{Queen's University, Kingston, Ontario.}
       \email{rk71@queensu.ca}
}
\fi

\ifx\form\PREP
\author{Robert Krone}
\address{Queen's University\\ Kingston, Ontario} 
\email{rk71@queensu.ca}
\fi

\maketitle

\begin{abstract}
 It has been shown previously that a large class of monomial maps equivariant under the action of an infinite symmetric group have finitely generated kernels up to the symmetric action.  We prove that these symmetric toric ideals also have finite Gr\"obner bases up to symmetry for certain monomial orders.  An algorithm is presented for computing equivariant Gr\"obner bases that terminates whenever a finite basis exists, improving on previous algorithms that only guaranteed termination in rings Noetherian up to symmetry.  This algorithm can be used to compute equivariant Gr\"obner bases of the above toric ideals, given the monomial map.
\end{abstract}

\section{Introduction}
Kernels of monomial maps between polynomial rings (i.e. toric ideals) can be used to describe certain statistical models \cite{diaconis1998algebraic}.  Often these maps coming from statistics have symmetry under the action of large symmetric groups, which has led to the study of toric ideals invariant under $\Sym$-action.  In various cases of interest these symmetric toric ideals were proved to be generated by the orbits of a finite number of binomials, even as they live in non-Noetherian rings e.g. \cite{aoki05:_markov_monte_carlo}\cite{hillar2012finite}.

A negative result \cite{loera06:_markov_bases_of_three_way} demonstrated monomial maps with kernels not finitely generated up to symmetry.  However in the case that the target ring has variables with at most one index running to infinity, \cite{draisma2013noetherianity} proved finite generation up to symmetry of the kernel always holds, generalizing the previous finite generation results.  This work left open many computational questions around efficiently finding generating sets up to symmetry of these ideal.  In some simple cases \cite{KKL:equivariant-markov} gave formulas for small generating sets.

The approach we take in this paper to computing generators is through Gr\"obner bases.  We prove that the symmetric toric ideals considered in \cite{draisma2013noetherianity} also have finite equivariant Gr\"obner bases, at least for specifically chosen monomial orders.  The question of whether finite Gr\"obner bases exist for all appropriate orders remains open.  Additionally we present an algorithm for computing equivariant Gr\"obner bases that has guaranteed termination as long as a finite Gr\"obner basis exists.  Although this algorithm concept has been used previously, it has not appeared before in the literature.  Previously presented algorithms guaranteed termination only in rings that are Noetherian up to symmetry, while many of the rings we work with fail this criterion.  This newly presented algorithm can compute a Gr\"obner basis of a symmetric toric ideal, given a description of the monomial map that defines it.

\section{Preliminaries}
Let $R$ be a graded commutative $K$-algebra equipped with a left action of monoid $\Pi$.  We mainly consider the case where $R$ has the structure of a monoid algebra, that is for some abelian monoid $\mon$, the elements of $R$ consist of formal sums of elements of $\mon$ with coefficients in $K$.  A common example of a graded monoid algebra is a polynomial ring $R = K[X]$ with variables from the set $X$ each with positive degree.  In this case $\mon$ is the free abelian monoid generated by $X$, which we will denote by $[X]$.  To make notation consistent with the polynomial case, we will denote the monoid algebra of $\mon$ over $K$ by $K\mon$.  We will also refer to elements of $\mon$ as ``monomials'' in analogy to the polynomial case.  Additionally the action of $\Pi$ on $R = K\mon$ will typically be induced by a $\Pi$-action on $\mon$ by monoid homomorphisms.

Our particular focus in this paper is when $\Pi$ is $\SymN$ or certain related monoids.  Here $\SymN$ will be defined as the group of all permutations of $\B N$ that fix all but a finite number of elements, $\SymN := \bigcup_n \F S_n$.

\begin{example}
 Let $R = K[x_1,x_2,x_3,\ldots]$ with $\SymN$ acting on $R$ by permuting the variables, so that $\sigma x_i = x_{\sigma(i)}$.
\end{example}

\begin{definition}
 An ideal $I \subseteq R$ is a {\em $\Pi$-invariant ideal} if $\sigma I \subseteq I$ for all $\sigma \in \Pi$.
\end{definition}

The ring $R$ is both an $R$-module and a $\Pi$-module, and there is a ring $R*\Pi$ which captures both of these actions, and which will be referred to as the {\em twisted monoid ring} of $\Pi$ with coefficients in $R$.  The elements of $R*\Pi$ are of the form $\sum_{\sigma \in \Pi} f_{\sigma}\cdot \sigma$ with each $f_\sigma \in R$ and only a finite number non-zero.  The additive structure is the same as the usual monoid ring, but multiplication is ``twisted'' in that
 \[ (f\cdot \sigma)(g \cdot \tau) = f\sigma(g) \cdot \sigma\tau \]
where $\sigma(g)$ denotes the element of $R$ obtained by acting on $g$ by $\sigma$.

$R$ is a $R*\Pi$-module, and the definition of $\Pi$-invariant ideals can be restated as $R*\Pi$-submodules of $R$.

When $R = K\mon$ with $\Pi$ acting on $\mon$, then we can define monoid $\mon *\Pi$ whose elements are pairs in $\mon \times \Pi$ with monoid operation
 \[ (m, \sigma)(n, \tau) = (m\sigma(n), \sigma\tau). \]
There is a left action of $\mon*\Pi$ on $\mon$ and the elements of $\mon *\Pi$ are the ``monomials'' of $R*\Pi$.

Some authors call a functor from category $\C C$ to the category of commutative $K$-algebras (or $K$-modules) a {\em $\C C$-algebra} (or {\em $\C C$-module} respectively) \cite{church2012fi}.  The monoid $\Pi$ can be considered as a category with single object $x$.  Algebra $R$ with left $\Pi$-action encodes the same data as a $\Pi$-algebra $F$ with $R = F(x)$.  We will therefore refer to $R$ as a $\Pi$-algebra, and a $\Pi$-invariant ideal is an ideal that is a $\Pi$-submodule of $R$.

\begin{definition}
 A $\Pi$-invariant ideal $I \subseteq R$ is {\em $\Pi$-finitely generated} if there is a finite set $F \subseteq I$ such that the $\Pi$-orbits of the elements of $F$ generate $I$.  Ring $R$ is called {\em $\Pi$-Noetherian} if every $\Pi$-invariant ideal in $R$ is $\Pi$-finitely generated.
\end{definition}
If a $\Pi$-invariant ideal $I$ is generated by the $\Pi$-orbits of a set $F$, we will write
 \[ I = \ideal{F}_{\Pi}. \]
Such a set $F$ generates $I$ as an $R*\Pi$-module.

A $\Pi$-algebra is $\Pi$-finitely generated if it is generated as an algebra by the $\Pi$-orbits of a finite set.

\begin{example}
 Continuing the above example of $R = K[x_1,x_2,x_3,\ldots]$ with $\SymN$ action, the ideal $\F m = \ideal{x_1,x_2,x_3,\ldots}$ is a $\SymN$-invariant ideal.  Moreover, it is $\SymN$-finitely generated because $\F m = \ideal{x_1}_{\SymN}$.  Also $R$ is a $\SymN$-finitely generated $K$-algebra with generator $x_1$.
\end{example}

\begin{definition}
 Let $R$ be a $\SymN$-algebra.  For $f \in R$, the {\em width} of $f$ is the smallest integer $n$ such that for every $\sigma \in \SymN$ that fixes $\{1,\ldots,n\}$, $\sigma$ also fixes $f$.  The width of $f$ is denoted $w(f)$.  If no such integer $n$ exists, then $w(f) = \infty$.  For a set $F \subseteq R$, its width is $w(F) := \sup_{f \in F}\{w(f)\}$.
\end{definition}
$\Sym$-algebra $R$ has the {\em finite width property} if every element of $R$ has finite width. The tools described in this paper generally apply only to such algebras, so from here forward every $\Sym$-algebra is assumed to have the finite width property.  For a $\SymN$-invariant ideal $I \subseteq R$ and an integer $n$ we can define the $n$th truncation of $I$ as
 \[ I_n := \{ f \in I \mid w(f) \leq n \}. \]
The set $I_n$ is naturally a $\F S_n$-invariant ideal of $R_n$.  An equivalent statement of the finite width property is that $R$ is equal to the direct limit of its truncations.  If $I$ is $\SymN$-finitely generated, then there is sufficiently large $n \in \B N$ such that $I = \ideal{I_n}_{\SymN}$.

The definition of width also applies to $\Pi = \Inc$, the monoid of strictly increasing functions, which is introduced below.
 
\begin{definition}
 Given $R = K\mon$ with $\Pi$ acting on $\mon$, there is a natural quasi-order $|_\Pi$ on $\mon$ called the {\em $\Pi$-divisibility quasi-order} defined by $a |_\Pi b$ if there exists $\sigma \in \Pi$ such that $\sigma a$ divides $b$.  Equivalently $a |_\Pi b$ if $b \in \ideal{a}_\Pi$.
\end{definition}

Recall that a monomial order on $R = K\mon$ is a total order $\leq$ on $\mon$ that is a well-order, and that respects multiplication, meaning if $a \leq b$ then $ac \leq bc$ for all $c \in \mon$.

\begin{definition}
 A monomial order $\leq$ on $R = K\mon$ {\em respects $\Pi$} if whenever $a \leq b$ then $\sigma a \leq \sigma b$ for all $\sigma \in \Pi$.
\end{definition}

Therefore order $\leq$ is a $\Pi$-respecting monomial order on $R$ if $\leq$ is a total well-order on $M$ that respects the action of $\mon*\Pi$.
Now we have the tools to describe the $\Pi$-equivariant version of Gr\"obner bases.
\begin{definition}
 Let $R = K\mon$ be a monoid ring with $\Pi$ action on $\mon$, and let $\leq$ be a $\Pi$ respecting monomial order.  Given a $\Pi$-invariant ideal $I \subseteq R$, a {\em $\Pi$-equivariant Gr\"obner basis} of $I$ is a set $G \subseteq I$ such that the $\Pi$ orbits of $G$ form a Gr\"obner basis of $I$,
 \[ \ideal{\LT \Pi G} = \LT I. \]
\end{definition}
We require $\leq$ to be a $\Pi$ respecting order because it is equivalent to the condition that
\[ \LT \sigma f = \sigma \LT f \]
for all $f \in R$ and $\sigma \in \Pi$.  Therefore with such an order
 \[ \ideal{\LT G}_{\Pi} = \ideal{\LT \Pi G} = \LT I. \]
This also implies that $\LT I$ is a $\Pi$-invariant ideal.  Note that since the $\Pi$ orbits of $G$ are a Gr\"obner basis of $I$, we have $\ideal{G}_\Pi = I$.

\begin{proposition}[Remark 2.1 of \cite{Brouwer09e}]\label{prop:nogroup}
 Let $\Pi$ be a group which acts non-trivially on $\mon$.  Then $K\mon$ has no $\Pi$ respecting monomial orders.
\end{proposition}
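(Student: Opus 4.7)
The plan is to derive a contradiction from the combination of three properties of a $\Pi$-respecting monomial order: totality, well-foundedness, and compatibility with the $\Pi$-action. The key observation is that having $\Pi$ be a group (so that inverses are available) lets us turn an ascending comparison $a < \sigma a$ into an infinite strictly decreasing sequence.

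First, since $\Pi$ acts non-trivially on $\mon$, pick $\sigma \in \Pi$ and $a \in \mon$ with $\sigma a \neq a$. Assume a $\Pi$-respecting monomial order $\leq$ exists on $K\mon$. By totality, either $a < \sigma a$ or $\sigma a < a$.

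In the first case, I would apply $\sigma^{-1}$ (which lives in $\Pi$ because $\Pi$ is a group) to both sides; the $\Pi$-respecting property together with strictness of the inequality (recall $a \neq \sigma a$ implies $\sigma^{-k}a \neq \sigma^{-(k-1)}a$ for every $k \geq 1$) yields $\sigma^{-1}a < a$. Iterating this gives a strictly decreasing chain
\[ a > \sigma^{-1}a > \sigma^{-2}a > \sigma^{-3}a > \cdots, \]
contradicting the well-ordering hypothesis on $\leq$. The case $\sigma a < a$ is symmetric: iterating the $\Pi$-respecting property with $\sigma$ itself produces $a > \sigma a > \sigma^2 a > \cdots$, again contradicting well-ordering.

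The only step that requires any care is justifying that the terms of the chain are genuinely distinct, i.e.\ that the inequalities are strict. This follows because a monomial order is antisymmetric, so $\sigma^{-k}a = \sigma^{-(k-1)}a$ would force $\sigma a = a$, contradicting our initial choice. No further subtlety is involved; the argument uses nothing beyond the formal definitions, so I do not expect any substantive obstacle. The essential role of $\Pi$ being a \emph{group} (rather than merely a monoid) is to guarantee that $\sigma^{-1} \in \Pi$ in the first case, which is what permits the reversal of a potential ascending chain into a descending one.
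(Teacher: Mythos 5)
Your proof is correct and follows essentially the same argument as the paper: choose $\sigma$ and a monomial moved by $\sigma$, split into the two cases given by totality, and iterate $\sigma$ or $\sigma^{-1}$ to manufacture an infinite strictly descending chain contradicting the well-order property. Your explicit justification of strictness via antisymmetry is a minor addition the paper leaves implicit, but the route is the same.
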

\begin{proof}
 Suppose that $\leq$ is a $\Pi$ respecting order and choose $\sigma \in \Pi$ and $m \in \mon$ such that $m \neq \sigma m$.  If $m > \sigma m$ then $\sigma^n m > \sigma^{n+1} m$ for all $n$ so
  \[ m > \sigma m > \sigma^2 m > \cdots \]
 is an infinite descending chain of monomials, contradicting the fact that $\leq$ is a well-order.  If $m < \sigma m$ then $m > \sigma^{-1} m > \sigma^{-2} m > \cdots$ is an infinite descending chain.
\end{proof}

In particular this means that $R$ with non-trivial $\Sym$ action has no $\Sym$ respecting monomial orders.  To deal with this problem, a related monoid is introduced to replace $\Sym$ which allows for monomial orders but is somehow large enough compared to $\Sym$ not to break properties like finite generation.

Define the {\em monoid of strictly increasing functions} as
\[ \Inc := \{ \rho:\B N \to \B N \mid \text{ for all } a < b, \rho(a) < \rho(b) \}. \]
For any $\Sym$-algebra $R$, there is a natural action of $\Inc$ on $R$ as follows.  Fixing $f \in R$, for any $\sigma \in \Sym$ the value of $\sigma f$ depends only on the restriction $\sigma|_{[w(f)]}$ considering $\sigma$ as a function $\B N \to \B N$.  For any $\rho \in \Inc$ there exists $\sigma \in \Sym$ such that $\sigma|_{[w(f)]} = \rho|_{[w(f)]}$ and define $\rho f = \sigma f$.  It can be checked that this gives a well-defined action of $\Inc$ on $R$.

It immediately follows from the definition of the action that $\Inc f \subseteq \Sym f$.  Despite the fact that $\Inc$ is not a submonoid of $\Sym$, it behaves like one in terms of its action on $R$.  An injective map $\sigma|_{[w(f)]}: [w(f)] \to \B N$ can always be factored into $\rho' \circ \tau$ with $\tau \in \F S_{w(f)}$ and $\rho':[w(f)] \to \B N$ a strictly increasing function.  The map $\rho'$ can be extended to some $\rho \in \Inc$, and then $\sigma f = \rho (\tau f)$.  Thus
 \[ \Sym f = \bigcup_{\tau \in \F S_{w(f)}} \Inc(\tau f). \]
This fact that the $\Sym$-orbit of any $f$ is a finite union of $\Inc$-orbits implies the following statements.

\begin{proposition}
 Let $R$ be a $\Sym$-algebra with the finite width property and let $I\subseteq R$ be a $\Sym$-invariant ideal.
 \begin{itemize}
  \item $I$ is $\Inc$-invariant.
  \item $I$ is $\Sym$-finitely generated if and only if $I$ is $\Inc$-finitely generated.
  \item If $R$ is $\Inc$-Noetherian then $R$ is $\Sym$-Noetherian.
 \end{itemize}
\end{proposition}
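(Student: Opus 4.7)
The plan is to leverage two facts that have already been established immediately above the proposition: the containment $\Inc f \subseteq \Sym f$, and the decomposition
\[ \Sym f = \bigcup_{\tau \in \F S_{w(f)}} \Inc(\tau f) \]
of every $\Sym$-orbit as a finite union of $\Inc$-orbits. Each of the three bullet points then follows by a short argument.

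For the first bullet, I would fix $f \in I$ and $\rho \in \Inc$, and observe $\rho f \in \Inc f \subseteq \Sym f \subseteq I$, the last containment using $\Sym$-invariance of $I$. For the forward direction of the second bullet, suppose $I = \ideal{F}_{\Sym}$ for a finite set $F \subseteq I$. Since $R$ has the finite width property, $w(f) < \infty$ for each $f \in F$, so $\F S_{w(f)}$ is a finite group. The set
\[ F' := \{ \tau f \mid f \in F,\ \tau \in \F S_{w(f)} \} \]
is therefore finite, and the displayed orbit decomposition gives $\Sym F = \Inc F'$, so that $I = \ideal{F}_{\Sym} = \ideal{F'}_{\Inc}$. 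Conversely, if $I = \ideal{F}_{\Inc}$ for a finite $F \subseteq I$, then $\Inc f \subseteq \Sym f$ yields $\ideal{F}_{\Inc} \subseteq \ideal{F}_{\Sym}$, while $F \subseteq I$ together with $\Sym$-invariance gives $\ideal{F}_{\Sym} \subseteq I$; sandwiching $I$ shows $I = \ideal{F}_{\Sym}$. The third bullet is then a formal consequence: assuming $R$ is $\Inc$-Noetherian, any $\Sym$-invariant ideal $I$ is $\Inc$-invariant by the first bullet, hence $\Inc$-finitely generated by hypothesis, hence $\Sym$-finitely generated by the second bullet.

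There is no serious obstacle here; the only point requiring care is verifying that $F'$ in the forward direction of bullet two is genuinely finite, which is precisely where the finite width property is used (without it, some $\F S_{w(f)}$ could be infinite and the argument would collapse). Every other step is either a direct application of invariance or a set-theoretic unwinding of the orbit decomposition.
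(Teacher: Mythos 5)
Your proposal is correct and follows exactly the route the paper intends: the paper states this proposition as an immediate consequence of the preceding fact that each $\Sym$-orbit is a finite union of $\Inc$-orbits (together with $\Inc f \subseteq \Sym f$), and your argument simply spells out those implications, including the correct use of the finite width property to ensure each $\F S_{w(f)}$ is finite. Nothing further is needed.
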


When computing Gr\"obner bases of $\Sym$-invariant ideals we will work with the $\Inc$ action instead.  If $G$ is an $\Inc$-equivariant Gr\"obner basis for $\Sym$-invariant ideal $I$, then the $\Sym$-orbits of $G$ also form a Gr\"obner basis of $I$.  Generally the rings we are interested in will have $\Inc$ respecting monomial orders.

\begin{example}
 Let $R = K[x_1,x_2,\ldots]$ with $\Inc$-action defined by $\rho \cdot x_i = x_{\rho(i)}$.  The lexicographic order $\leq$ on the monomials of $R$ with $x_1 < x_2 < x_3 < \cdots$ is a $\Inc$ respecting monomial order.  This is the only possible lexicographic order on $R$ that respects $\Inc$.  There are also a graded lexicographic and a graded reverse lexicographic order on $R$ that respect $\Inc$.  There is no $\Inc$ respecting monomial order on $R$ that is defined by a single weight vector in $\B R^{\B N}$.
\end{example}

It is an open question to characterize all possible $\Inc$ respecting monomial orders on a given ring $K\mon$ with $\Inc$ action.  We can make the following statement about such orders.

\begin{proposition}
 If $\leq$ is a $\Pi$ respecting monomial order on $K\mon$, then $\leq$ refines the $\Pi$-divisibility quasi-order $|_\Pi$.
\end{proposition}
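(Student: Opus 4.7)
The plan is to prove the statement by unpacking both quasi-orders and reducing $a \mathrel{|_\Pi} b$ to a chain of inequalities under $\leq$. By definition, $a \mathrel{|_\Pi} b$ means there exist $\sigma \in \Pi$ and $c \in \mon$ with $b = (\sigma a)\cdot c$, so it suffices to establish $a \leq \sigma a$ and $\sigma a \leq b$ separately and then chain them.

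For $\sigma a \leq b$, I would first record the general fact that $1 \leq m$ for every $m \in \mon$. This is the standard well-order argument: were $1 > m$ for some $m$, multiplicativity of $\leq$ would give the infinite descending chain $1 > m > m^2 > m^3 > \cdots$, contradicting that $\leq$ is a well-order. Applying this with $m = c$ and multiplying by $\sigma a$ on both sides yields $\sigma a = (\sigma a) \cdot 1 \leq (\sigma a) \cdot c = b$.

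For $a \leq \sigma a$, I would adapt the chain argument from Proposition~\ref{prop:nogroup}. Suppose for contradiction that $a > \sigma a$ for some $a \in \mon$ and $\sigma \in \Pi$. Since $\leq$ respects $\Pi$, applying $\sigma$ to both sides preserves the inequality, so $\sigma a > \sigma^2 a$; iterating gives $\sigma^n a > \sigma^{n+1} a$ for all $n \geq 0$, and hence an infinite descending chain
\[ a > \sigma a > \sigma^2 a > \sigma^3 a > \cdots, \]
contradicting the well-order property. Combining, $a \leq \sigma a \leq b$.

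There is no real obstacle here; the argument just repackages two well-ordering tricks. The only subtlety worth flagging is the contrast with Proposition~\ref{prop:nogroup}: in the group case one can invert $\sigma$ to rule out \emph{both} strict inequalities $a > \sigma a$ and $a < \sigma a$, forcing $a = \sigma a$ and forbidding any $\Pi$-respecting order on a nontrivial action; in the monoid case only the first direction is blocked, which is exactly what delivers the one-sided conclusion $a \leq \sigma a$ that the proposition needs.
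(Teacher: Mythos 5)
Your proof is correct and takes essentially the same route as the paper's: the paper likewise gets $a \leq \sigma a$ by citing the descending-chain argument from the proof of Proposition \ref{prop:nogroup} and then concludes with $1 \leq c$ and multiplicativity, $\sigma a \leq c\sigma a = b$; you simply spell out the two well-ordering steps explicitly. (One shared tacit point: passing from $a > \sigma a$ to $\sigma a > \sigma^2 a$ uses that applying $\sigma$ preserves \emph{strict} inequality, which holds when $\sigma$ acts injectively on $\mon$ --- as it does for $\Inc$, the case the paper cares about --- since the definition of a $\Pi$-respecting order only guarantees preservation of $\leq$.)
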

\begin{proof}
 Suppose $a$ and $b$ are monomials with $a |_\Pi b$, so there is some pair $\sigma \in \Pi$, $c \in \mon$ such that $c\sigma a = b$.  From the proof of Proposition \ref{prop:nogroup} we see that $a \leq \sigma a$.  Since $1 \leq c$ and $\leq$ respects multiplication, $\sigma a \leq c\sigma a = b$.
\end{proof}

An implication of this proposition is that if $K\mon$ has a $\Pi$ respecting monomial order then the $\Pi$-divisibility quasi-order must be a partial order, meaning it has the anti-symmetry property: if $a |_\Pi b$ and $b |_\Pi a$ then $a = b$.

If $R$ is $\Pi$-Noetherian with a $\Pi$ respecting monomial order, then any $\Pi$-invariant ideal $I \subseteq R$ will have a finite $\Pi$-equivariant Gr\"obner basis.  This follows from the fact that $\LT I$ is $\Pi$-finitely generated.  We recount two prior results that give examples of $\Inc$-Noetherian rings, and will be directly relevant to the remainder of this paper.

\begin{theorem}[Theorem 1.1 of \cite{hillar2012finite}]\label{thm:HS}
 Let $X = \{x_{ij} \mid i \in [k], j \in \B N\}$, and let $\Sym$ act on $[X]$ by permuting the second index, $\sigma x_{ij} = x_{i\sigma(j)}$ for $\sigma \in \Sym$.  Then $K[X]$ is $\Inc$-Noetherian.
\end{theorem}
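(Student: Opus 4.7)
The plan is to reduce $\Inc$-Noetherianity of $R := K[X]$ to a well-quasi-order statement on monomials, which I would establish via Dickson's lemma together with Higman's lemma.

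First I would fix an $\Inc$-respecting monomial order on $R$ in order to pass to leading terms; for example the lexicographic order induced by the variable ordering $x_{ij} < x_{i'j'}$ iff $j<j'$ or ($j=j'$ and $i<i'$), which is easily checked to respect $\Inc$. Given any $\Inc$-invariant ideal $I \subseteq R$, the ideal $\LT I$ is an $\Inc$-invariant monomial ideal. A standard Gr\"obner-basis argument reduces the theorem to showing that every $\Inc$-invariant monomial ideal is $\Inc$-finitely generated: if $\LM f_1,\ldots,\LM f_s$ are $\Inc$-orbit generators of $\LT I$, then $f_1,\ldots,f_s$ form an $\Inc$-equivariant Gr\"obner basis of $I$ and therefore $\Inc$-generate $I$. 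Finite $\Inc$-generation of every monomial ideal is in turn equivalent to $|_\Inc$ being a well-quasi-order on the monoid $[X]$, i.e.\ having no infinite antichain and no infinite strictly descending chain.

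Next I would encode each monomial combinatorially as a finite sequence of columns. Write $m = \prod_{i,j} x_{ij}^{a_{ij}}$ and set $v_j(m) := (a_{1j}, \ldots, a_{kj}) \in \B N^k$; since $m$ has finite width, all but finitely many $v_j$ vanish, so $m$ determines a finite sequence $(v_1, \ldots, v_\ell)$ of vectors in $\B N^k$. Under this encoding, $m \,|_\Inc m'$ holds precisely when there exists a strictly increasing map $\rho : [\ell]\to[\ell']$ with $v_j(m) \leq v_{\rho(j)}(m')$ componentwise for every $j$. This is exactly the Higman embedding order on finite sequences over $\B N^k$ with componentwise $\leq$.

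Finally I would invoke the well-quasi-order toolbox. Dickson's lemma gives that $(\B N^k, \leq)$ is a well-quasi-order, and Higman's lemma then says that finite sequences over a well-quasi-order, with the embedding order just described, are again a well-quasi-order. Consequently $|_\Inc$ is a well-quasi-order on $[X]$, which closes the reduction and proves the theorem. The main obstacle is Higman's lemma itself, which is the nontrivial combinatorial input that allows the unbounded index $j \in \B N$ to coexist with the bounded index $i \in [k]$; the column encoding is designed precisely so that $\Inc$-divisibility matches the embedding order to which Higman's lemma applies.
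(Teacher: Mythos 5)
Your proof is correct, but note that the paper you are being compared against does not prove this statement at all: it imports it verbatim as Theorem 1.1 of the cited Hillar--Sullivant paper. Your argument --- pass to leading terms under an $\Inc$-respecting order, reduce Noetherianity to $|_{\Inc}$ being a well-quasi-order on monomials, encode a monomial as the finite sequence of its exponent columns in $\B N^k$, and conclude by Dickson's lemma plus Higman's lemma --- is essentially the standard proof given in that reference, and the details you supply (in particular keeping the zero columns up to the width, so that the last column is nonzero and any witnessing $\rho \in \Inc$ restricts to a strictly increasing map $[\ell]\to[\ell']$, making $\Inc$-divisibility coincide exactly with the Higman embedding order) check out.
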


\begin{theorem}[Theorem 1.1 of \cite{draisma2013noetherianity}]\label{thm:DEKL}
 Let $K[Y]$ be a $\Sym$-finitely generated $\Sym$-algebra with the finite width property, and with $\Sym$ action on variable set $Y$.  For $K[X]$ defined as in Theorem \ref{thm:HS}, let $\phi$ be a $\Sym$-equivariant monomial map
  \[ \phi: K[Y] \to K[X]. \]
 Then
 \begin{itemize}
  \item $\ker \phi$ is $\Inc$-finitely generated,
  \item $\im \phi$ is $\Inc$-Noetherian.
 \end{itemize}
\end{theorem}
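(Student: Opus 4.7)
The plan is to reduce both statements to a combinatorial problem about monomials in $K[X]$ and then exploit the Hillar--Sullivant Noetherianity (Theorem \ref{thm:HS}). First I would normalize $K[Y]$: since $K[Y]$ is $\Sym$-finitely generated with the finite width property, pick $\Sym$-orbit representatives $y^{(1)},\ldots,y^{(s)}$ of widths $w_1,\ldots,w_s$ and replace $K[Y]$ by the polynomial ring in $Y' = \{y_{t,J} : t \in [s],\ J \text{ a tuple of distinct entries of } \B N \text{ of length } w_t\}$, with $\Sym$ acting on the second index. The original $K[Y]$ is a $\Sym$-equivariant quotient of $K[Y']$, so proving both bullets for the lifted monomial map $\phi : K[Y'] \to K[X]$ suffices (kernels only grow under surjective quotienting, and images are unchanged). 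Each $\phi(y^{(t)})$ is then a concrete monomial in finitely many $x_{ij}$, and $\Sym$-equivariance propagates to all of $Y'$ by index relabelling.

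For the first bullet, since $\phi$ is a monomial map, $\ker \phi$ is generated by pure binomials $y^a - y^b$ with $\phi(y^a) = \phi(y^b)$. Each such binomial corresponds to a pair of multisets of variables in $Y'$ whose expansions in $K[X]$ agree as monomials. I would impose a well-partial-order on these binomial configurations compatible with $\Inc$-divisibility of the underlying $y$-products, and apply a Higman/Dickson-type lemma for sequences indexed by the finite type set $[s]$ to show that the set of minimal configurations is finite. The corresponding binomials then $\Inc$-generate $\ker \phi$.

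For the second bullet, the plain embedding $\im \phi \hookrightarrow K[X]$ does not immediately imply Noetherianity, since subrings of Noetherian rings can fail to be Noetherian. My strategy is to take an ascending chain of $\Inc$-invariant ideals $J_1 \subseteq J_2 \subseteq \cdots$ in $\im \phi$, extend to $J_iK[X]$ inside the $\Inc$-Noetherian ring $K[X]$, use Theorem \ref{thm:HS} to stabilize the extensions, and then argue that the contractions satisfy $(J_i K[X]) \cap \im \phi = J_i$. The main obstacle is this contraction equality: it requires a combinatorial matching lemma asserting that whenever a monomial $m \in \im \phi$ factors in $K[X]$ as (a generator of $J_i$) times another monomial $h$, one can rearrange so that $h$ is itself a product of $\phi(y^{(t)})$-images. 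Such a lemma must exploit the fixed shapes of the monomials $\phi(y^{(t)})$, and I expect it to demand a multi-variable well-partial-order or an exchange argument on the $x_{ij}$-indexings. This matching step is the technical heart of the theorem. A fallback, if the direct contraction argument proves elusive, is an induction on $k$ or on $\max_t w_t$, peeling off one $\Sym$-orbit of generators $\phi(y^{(t)})$ at a time to reduce to a simpler configuration whose Noetherianity is already established.
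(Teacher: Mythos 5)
First, note that this statement is not proved in the paper at all: it is quoted verbatim as Theorem 1.1 of \cite{draisma2013noetherianity}, so there is no internal proof to compare against, and your proposal has to stand on its own as a proof of the DEKL theorem. As such it has two genuine gaps. For the kernel statement, the step ``impose a well-partial-order on binomial configurations and apply a Higman/Dickson-type lemma'' is precisely the part that cannot be waved through: the $\Inc$-divisibility order on monomials (or binomials) of $K[Y']$ is \emph{not} a well-partial-order in general --- Hillar and Sullivant \cite{hillar2012finite} show $K[Y]$ with $Y=\{y_{ij}\mid i,j\in\B N\}$ is not even $\Sym$-Noetherian --- and your sketch never uses the hypothesis that $X$ has only one index running to infinity. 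An argument of the shape you describe would apply verbatim to the examples of \cite{loera06:_markov_bases_of_three_way}, whose kernels are not finitely generated up to symmetry, so it must fail somewhere. The actual proof hinges on factoring $\phi\circ\theta$ as $\psi\circ\pi$ through the intermediate ring $K[Z]$ (exactly the factorization this paper recalls, with the matching monoid $\mm=\pi[Y']$ and Lemma \ref{lem:mm}), and on establishing a well-partial-order on $\mm$; that construction, not a generic Higman/Dickson application in $K[Y']$, is where the finitely-many-rows structure of $X$ and the matching description of $\mm$ enter, and it is the technical core you have skipped.

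For the image statement, your extension--contraction strategy is also not viable as stated: $(J_iK[X])\cap\im\phi=J_i$ is exactly the kind of identity that fails for general subalgebras, and you yourself flag the required ``matching lemma'' as unproved, so the heart of the argument is missing rather than deferred. The route taken in \cite{draisma2013noetherianity} is different: one first proves that the matching monoid algebra $K\mm=\im\pi$ is $\Inc$-Noetherian (again via the well-partial-order on $\mm$), and then observes that $\im\phi=\psi(\im\pi)$ is an equivariant quotient (surjective image) of $K\mm$, and $\Pi$-Noetherianity passes to such quotients by pulling invariant ideals back; no contraction from $K[X]$ is needed. The kernel statement is then obtained from finite generation of $\ker\pi$ (a degree-bound argument of the type reproduced in Proposition \ref{prop:gb-of-phi} of this paper) together with lifting finitely many generators of $\ker\psi\cap\im\pi$ along $\pi$. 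Your preliminary reduction replacing $Y$ by the standardized variable set $Y'$ is fine and matches the paper's use of $\theta$, but without the matching-monoid Noetherianity the two bullets remain unproved.
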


The conditions on ring $K[Y]$ in Theorem \ref{thm:DEKL} are quite general and Hillar and Sullivant \cite{hillar2012finite} prove that such rings are generally not $\Sym$-Noetherian.  They show that Noetherianity fails for the example $K[Y]$ where $Y = \{y_{ij} \mid i, j \in \B N\}$ with $\sigma y_{ij} = y_{\sigma(i)\sigma(j)}$ for $\sigma \in \Sym$.

When $R$ is not $\Pi$-Noetherian, we do not know in general if a $\Pi$-finitely generated ideal $I\subseteq R$ has a finite $\Pi$-equivariant Gr\"obner basis, or if so, for which monomial orders.  However we will prove in Section \ref{sec:egbtoric} that the $\Sym$-invariant toric ideal $\ker \phi$ as in Theorem \ref{thm:DEKL} does have finite $\Inc$-equivariant Gr\"obner bases for specifically chosen monomial orders.  This will allow an algorithm to compute such a Gr\"obner basis of $\ker \phi$, given the map $\phi$.

\section{Equivariant Buchberger algorithm}
\subsection{Description of the algorithm}
An equivariant adaptation of Buchberger's algorithm was first proposed in \cite{aschenbrenner2007finite} and formalized in \cite{Brouwer09e} and \cite{draisma2013noetherianity}.

Let $R = K\mon$ with $\Pi$ acting on $\mon$, and let $\leq$ be a $\Pi$ respecting monomial order.
For $G \subseteq R$, a {\em $\Pi$-normal form} of $f$ with respect to $G$ denoted $\NF_{\Pi G}(f)$ is the result of repeated reductions of $f$ by elements of $\Pi G$ until no more reductions are possible.

The equivariant Buchberger algorithm departs from the conventional Buchberger algorithm only at the step of adding new S-pairs to the list $S$.  This departure is described after, along with a definition of $O_{f,g}$ and the ``finite S-pair condition.''

\begin{algorithm}[Brouwer--Draisma \cite{Brouwer09e}]\label{alg:Buchberger}
$G = \alg{Buchberger}(F)$
\begin{algorithmic}[1]
\REQUIRE $F$ is a finite set of elements in $R = K\mon$ with $\Pi$ acting on $\mon$ and satisfying the finite S-pair condition.
\ENSURE $G$ is $\Pi$-equivariant Gr\"obner basis of $\ideal{F}_{\Pi}$.

\smallskip \hrule \smallskip

\STATE $G\gets F$
\STATE $S\gets \bigcup_{f,g\in G} O_{f,g}$
\WHILE{$S\neq\emptyset$}
	\STATE pick $(h_1,h_2) \in S$
	\STATE $S\gets S\setminus\{(h_1,h_2)\}$ 
	\STATE $h \gets \NF_{\Pi G}(h_1 - \frac{\LC(h_1)}{\LC(h_2)}h_2)$
  	\IF{$h \neq 0$}
		\STATE $G\gets G\cup \{h\}$
		\STATE $S\gets S\cup \left(\bigcup_{g\in G}O_{g,h}\right)$
	\ENDIF
\ENDWHILE
\smallskip \hrule \smallskip
\end{algorithmic}
\end{algorithm}

Given $f,g \in R$ define
 \[ \C S_{f,g} := \{(m_1f,m_2g) \mid m_1,m_2 \in \mon * \Pi \text{ such that } \LT m_1f = \LT m_2g\}. \]
This set is closed under the diagonal action of $\mon *\Pi$ making $\C S_{f,g}$ a $\mon *\Pi$-set.  A set $G \subseteq R$ satisfies the {\em equivariant Buchberger criterion} if for all $(h_1,h_2) \in \bigcup_{f,g\in G} \C S_{f,g}$,
 \[ \NF_{\Pi G}(\LC(h_2)h_1 - \LC(h_1)h_2) = 0. \]
The set $G$ is a $\Pi$-equivariant Gr\"obner basis of $\ideal{G}_{\Pi}$ if and only if it satisfies the equivariant Buchberger criterion.  The proof of this fact follows by applying the usual Buchberger criterion to the set $\Pi G$ (see Theorem 2.5 of \cite{Brouwer09e}).

For each pair $f,g \in G$, it is sufficient to check the equivariant Buchberger criterion on a $\mon * \Pi$ generating set of $\C S_{f,g}$, which we denote $O_{f,g}$.

\begin{definition}
 A $\Pi$-algebra $R = K\mon$ has the {\em finite S-pair condition} if for any $f,g \in R$, the set $\C S_{f,g}$ is finitely generated as a $\mon * \Pi$-set.  In \cite{Brouwer09e} this condition is referred to as ``EGB4.''
\end{definition}

If $R$ fails the finite S-pair condition then we cannot apply the equivariant Buchberger algorithm.
When $\Pi$ is trivial and $R$ is a polynomial ring (the setting of the conventional Buchberger algorithm), $\C S_{f,g}$ is generated by a single pair
 \[ (\frac{\lcm(\LT f, \LT g)}{\LT(f)} f, \frac{\lcm(\LT f, \LT g)}{\LT(g)} g).\]
This generator is called the ``S-pair'' of $f,g$.  Therefore $R$ in this case satisfies the finite S-pair condition, and the equivariant Buchberger algorithm specializes to the conventional Buchberger algorithm.

\begin{proposition}
 If $\Sym$-algebra $R$ is a polynomial ring with the finite width condition, then $R$ has the finite S-pair condition.
\end{proposition}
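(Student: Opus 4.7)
The plan is to use the finite width property to show that $\C S_{f,g}$ collapses, under the $\mon*\Inc$-action, onto a finite combinatorial object. First I would replace $f,g$ by their leading monomials $u=\LT f$ and $v=\LT g$, which have widths at most $w(f)$ and $w(g)$. A pair $(m_1f,m_2g)\in\C S_{f,g}$ with $m_i=(a_i,\sigma_i)$ is specified by a tuple $(a_1,\sigma_1,a_2,\sigma_2)\in\mon\times\Inc\times\mon\times\Inc$ solving $a_1\sigma_1 u = a_2\sigma_2 v$. Setting $\ell=\lcm(\sigma_1 u,\sigma_2 v)$, every such solution has $a_1 = c\cdot(\ell/\sigma_1 u)$ and $a_2 = c\cdot(\ell/\sigma_2 v)$ for a unique $c\in\mon$, and the element $(c,1)\in\mon*\Inc$ sends the ``base'' S-pair (the $c=1$ case) to the general one. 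So the problem reduces to finding finitely many $\Inc$-orbits among the base S-pairs, which are parameterized by $(\sigma_1,\sigma_2)\in\Inc^2$.

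Next, the diagonal $\Inc$-action $\tau\cdot(\sigma_1,\sigma_2)=(\tau\sigma_1,\tau\sigma_2)$ sends base S-pair to base S-pair, so I would classify pairs of restrictions $(\sigma_1|_{[w(f)]},\sigma_2|_{[w(g)]})$ modulo common post-composition by some $\tau\in\Inc$. Such a pair consists of two strictly increasing functions with finite domains, and its $\Inc$-equivalence class is captured by the ``interleaving pattern'' on the combined image $\sigma_1([w(f)])\cup\sigma_2([w(g)])$: the linear order on this finite set together with labels identifying, for each element, whether it comes from $\sigma_1$, from $\sigma_2$, or from both. Since the combined image has size at most $w(f)+w(g)$, there are only finitely many such patterns, yielding a finite $\mon*\Inc$-generating set of $\C S_{f,g}$.

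The only substantive thing to verify is that ``same interleaving pattern'' really does imply ``same $\Inc$-orbit'': given two pairs with identical patterns, one must construct a single $\tau\in\Inc$ whose action aligns them on the relevant finite domains. This amounts to picking $\tau$ bijectively between the two combined finite images as dictated by the pattern, then extending strictly increasingly to all of $\B N$ -- a routine construction rather than the main obstacle. The conceptual point driving the proof is simply that the finite width property forces the combinatorics of $(\sigma_1,\sigma_2)$ to live inside a finite-dimensional world.
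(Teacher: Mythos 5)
Your overall strategy is the paper's: for fixed $\sigma_1,\sigma_2\in\Inc$, the polynomial-ring structure reduces every element of $\C S_{f,g}$ to a monomial multiple of the classical S-pair of $\sigma_1 f,\sigma_2 g$, and finite width lets you classify the restrictions $(\sigma_1|_{[w(f)]},\sigma_2|_{[w(g)]})$ by their interleaving pattern, of which there are finitely many; the paper realizes each pattern by strictly increasing maps $\rho_1:[w(f)]\to[w(f)+w(g)]$, $\rho_2:[w(g)]\to[w(f)+w(g)]$ and takes the corresponding classical S-pairs as the generating set.

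However, the step you dismiss as routine is false as you state it: two base S-pairs with the same interleaving pattern need not be related by any $\tau\in\Inc$, in either direction. Since every strictly increasing $\tau:\B N\to\B N$ satisfies $\tau(n)\geq n$ and cannot shrink gaps, a pair whose combined image is $\{1,5\}$ cannot be carried to one with combined image $\{2,4\}$ (that would need $\tau(5)=4$), nor conversely (that would need $\tau(2)=1$), even though the patterns agree. What your argument actually needs, and what does hold, is the one-directional statement: every pair realizing a given pattern is $\tau$ applied to the \emph{compressed} realization of that pattern, namely the one whose combined image is an initial segment of $[w(f)+w(g)]$; from that representative the gaps and minima in the target are always at least as large, so the partial order-preserving bijection extends to some $\tau\in\Inc$. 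So you must choose the generators to be these compressed realizations (this is precisely the role of the paper's $\rho_1,\rho_2$), rather than appeal to an orbit-equivalence among arbitrary pairs with the same pattern. With that correction your proof goes through and coincides with the paper's.
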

\begin{proof}
 Fix $f,g \in R$.
 Since $R$ is a polynomial ring, for fixed $\sigma_1,\sigma_2 \in \Inc$, all elements of $\C S_{f,g}$ of the form $(m_1\sigma_1 f, m_2\sigma_2 g)$ with $m_1,m_2 \in \mon$ are monomial multiplies of the usual S-pair of $\sigma_1 f, \sigma_2 g$,
  \[ \left(\frac{m}{\LT \sigma_1 f} \sigma_1 f,\; \frac{m}{\LT \sigma_2 g} \sigma_2 g\right) \]
 where $m = \lcm(\LT \sigma_1 f,\LT \sigma_2 g)$.

 Any $f,g \in R$ have finite width so $\sigma_1 f$ depends only on $\sigma_1|_{[w(f)]}$, and similarly for $\sigma_2 g$.  In fact we can always factor the pair as
  \[ (\sigma_1 f, \sigma_2 g) = \tau(\rho_1 f, \rho_2 g) \]
 for some $\tau \in \Inc$, while $\rho_1:[w(f)] \to [w(f) + w(g)]$ and $\rho_2:[w(g)] \to [w(f) + w(g)]$ are strictly increasing functions.  Here $\rho_1$ and $\rho_2$ are chosen to ``interlace'' the variables of $f$ and $g$ in the same way as $\sigma_1,\sigma_2$.  (To consider $\rho_1,\rho_2$ as elements of $\Inc$, take any choice of extensions to maps on $\B N$.)
 
 Then $\C S_{f,g}$ is generated by the finite set of pairs of the form
  \[ \left(\frac{m}{\LT \rho_1 f} \rho_1 f,\; \frac{m}{\LT \rho_2 g} \rho_2 g\right) \]
 with $\rho_1:[w(f)] \to [w(f) + w(g)]$ and $\rho_2:[w(g)] \to [w(f) + w(g)]$ where $m = \lcm(\LT \rho_1 f,\LT \rho_2 g)$.
\end{proof}

We note that Algorithm \ref{alg:Buchberger} is guaranteed to terminate when $R$ is $\Pi$-Noetherian.  Without Noetherianity, a finite Gr\"obner basis might not exist, and when it does it is not known in general if the equivariant Buchberger algorithm will terminate.

\subsection{A terminating $\Inc$-equivariant Gr\"obner basis algorithm}
The algorithm concept in this section has been used implicitly before in computations by Jan Draisma, Anton Leykin and perhaps others, but does not appear in the literature (at least to the knowledge of the author) so it is given a presentation here.

Let $R = K\mon$ with $\Inc$ action on $\mon$, $R$ satisfying the finite S-pair condition, and with each truncation $R_n$ a Noetherian ring.  (These conditions are satisfied for example when $R$ is $\Inc$-finitely generated polynomial ring, as in Theorem \ref{thm:DEKL}.)   Let $I \subseteq R$ be a $\Inc$-invariant ideal which is $\Inc$-generated by finite set $F$, and moreover has finite $\Inc$-equivariant Gr\"obner basis $G$.  Define the {\em generator truncation} of $I$ to be $\tilde{I}_{F,n} := \ideal{\Inc F \cap R_n} \cap R_n$.  Note that $\tilde{I}_{F,n} \subseteq I_n$ but in general equality does not hold.  For $f \in I$ define $w_F(f)$ to be the minimum value of $n$ for which $f \in \tilde{I}_{F,n}$.

The truncated EGB algorithm again takes finite generating set $F$ as its input.  For each successive $n \geq w(F)$, compute a set $G_n$ such that $\Inc G_n \cap R_n$ is a Gr\"obner basis for $\tilde{I}_{F,n}$.  Then check if $G_n$ is a $\Inc$-equivariant Gr\"obner basis of $I$ using the equivariant Buchberger criterion, and if so return $G_n$.

\begin{algorithm}\label{alg:truncBuch}
$G = \alg{TruncatedEGB}(F)$
\begin{algorithmic}[1]
\REQUIRE $F$ is a finite set of elements in $R = K\mon$ with $\Inc$ acting on $\mon$, $R$ satisfies the finite S-pair conditions, and each $R_n$ is Noetherian.
\ENSURE $G$ is a $\Inc$-equivariant Gr\"obner basis of $I := \ideal{F}_{\Inc}$.

\smallskip \hrule \smallskip

\STATE $G\gets F$
\STATE $n\gets w(F)$
\WHILE{$G$ not a $\Inc$-equivariant Gr\"obner basis of $I$}
	\STATE $G\gets$ Gr\"obner basis of $\tilde{I}_{F,n}$
	\STATE $n \gets n+1$
\ENDWHILE
\smallskip \hrule \smallskip
\end{algorithmic}
\end{algorithm}

\begin{proof}[proof of termination]
For each $n$, let $G_n$ denote the value of $G$ after that step.  Computing $G_n$ is a finite process since it takes place in $R_n$ which is Noetherian.  $G_n$ is a finite set and so it has a finite number of S-pairs to be checked.  Therefore testing whether $G_n$ is a $\Inc$-equivariant Gr\"obner basis is finite.

It remains to be proved that $G_n$ is a $\Inc$-equivariant Gr\"obner basis for some value of $n$.  If $H$ is a $\Inc$-equivariant Gr\"obner basis of $I$, for any $h \in H$ we have $h \in \tilde{I}_{F,n}$ for all $n \geq w_F(h)$, so $\LT(h) \in \LT(\tilde{I}_{F,n})$.  Therefore there is some element $g \in G_n$ with $\LT(g)|_{\Inc} \LT(h)$.  For $n = \max_{h\in H} w_F(h)$, the initial ideal $\ideal{\LT(G_n)}_{\Inc}$ contains $\ideal{\LT(H)}_{\Inc}$ and so $G_n$ is a $\Inc$-equivariant Gr\"obner basis of $I$.
\end{proof}

In practice, $G_n$ can be computed either using a traditional Gr\"obner basis algorithm on input $\Inc F \cap R_n$, or using an equivariant Buchberger algorithm on input $F$ with the following two caveats:
\begin{itemize}
 \item consider only S-pairs $(m_1f, m_2g)$ with $m_1f$ and $m_2g$ both having width $\leq n$,
 \item perform only reductions such that the outcome has width $\leq n$.
\end{itemize}
Moreover we do not need to restart the algorithm from scratch for each $n$.  Instead $G_{n-1} \cup F$ can be used as the input for the $n$th step instead of $F$.

If $\leq$ is a width order (a monomial order such that $w(a) < w(b)$ implies $a < b$), the second condition is satisfied automatically since reductions cannot increase the width.  Therefore the normal form of a given S-pair does not depend on $n$ and only needs to be computed once.  As a result we can use Algorithm \ref{alg:Buchberger}, queuing S-pairs by width so that the smallest width S-pairs are considered first.  The algorithm terminates once the queue is empty.  A separate check for whether $G_n$ is a $\Inc$-equivariant Gr\"obner basis for $I$ is not needed since this is equivalent to reducing all S-pairs in the queue.

\section{Symmetric Gr\"obner bases of toric ideals}\label{sec:egbtoric}
The previous section an algorithm was given that is capable of computing a n $\Inc$-equivariant Gr\"obner basis of an ideal in a ring of the form $K[Y]$ with $\Inc$ action on $Y$ and $Y$ having a finite number of orbits, with guaranteed termination {\em if} a finite Gr\"obner basis for the ideal exists.  In this section we prove that any $\Sym$-invariant toric ideal $\ker \phi$ of form in Theorem \ref{thm:DEKL} has a finite Gr\"obner basis with respect to a particularly chosen monomial order.  We then show that a Gr\"obner basis can be computed given the monomial map $\phi$, using elimination.

\subsection{Existence of equivariant Gr\"obner bases of toric ideals}
As in Theorem \ref{thm:DEKL}, let $K[Y]$ be a $\Sym$-finitely generated polynomial ring with $\Sym$ action on the variable set $Y$.  Let $X = \{x_{i,j} \mid i \in [\ell], j \in \B N\}$ with $\Sym$ acting on the second index of each $x_{i,j}$.  Let $\phi:K[Y] \to K[X]$ be a $\Sym$-equivariant monomial map.

\begin{theorem}\label{theo:finGB}
 $\ker \phi$ has a finite $\Inc$-equivariant Gr\"obner basis $H$ with respect to a $\Inc$-respecting monomial order $\leq$ that can be constructed based on $\phi$.
\end{theorem}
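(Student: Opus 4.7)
The approach is to exploit the map $\phi$ to pull back Gr\"obner-theoretic structure from $K[X]$, which is $\Inc$-Noetherian by Theorem \ref{thm:HS}, to $K[Y]$, where $\Inc$-Noetherianity generally fails. Fix a $\Inc$-respecting monomial order $<_X$ on $K[X]$, such as the lexicographic order analogous to the one described in the earlier example. Define the order $<$ on $[Y]$ by pullback: $m_1 < m_2$ iff $\phi(m_1) <_X \phi(m_2)$, with ties broken by any auxiliary $\Inc$-respecting order on the fibers of $\phi$ (constructible from the combinatorial structure of $Y$). Since $\phi$ is $\Sym$-equivariant and $<_X$ is $\Inc$-respecting, $<$ is an $\Inc$-respecting monomial order on $K[Y]$, with the well-order property reducing to that of $<_X$ on $\phi([Y])$ and of the tiebreaker within each fiber.

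To link this to the structure of $\im\phi$, set up the elimination framework. Let $R = K[Y \cup X]$ with the combined $\Sym$-action, and consider the graph ideal $J = \ideal{y - \phi(y) : y \in Y}_{\Inc} \subseteq R$. Classically $J \cap K[Y] = \ker \phi$, and the natural $K$-algebra map $R \to K[X]$ defined by $y \mapsto \phi(y), x \mapsto x$ has image $\im \phi$ and kernel exactly $J$, so $R/J \cong \im \phi$; by Theorem \ref{thm:DEKL}, this quotient is $\Inc$-Noetherian. Extend the orders above to an elimination order $<_R$ on $R$ that prioritizes $X$-variables over $Y$-variables. Elements of an $\Inc$-equivariant Gr\"obner basis of $J$ under $<_R$ whose leading terms lie in $K[Y]$ restrict to an $\Inc$-equivariant Gr\"obner basis $H$ of $\ker \phi$ under $<$.

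The crucial finiteness claim is that $\LT(\ker \phi)$ is $\Inc$-finitely generated as a monomial ideal of $K[Y]$. The pullback choice of $<$ makes the standard monomials of $\ker \phi$ (i.e., $[Y] \setminus \LT(\ker \phi)$) biject via $\phi$ with $\phi([Y]) \subseteq [X]$, with exactly one representative per $\phi$-fiber. The $\Inc$-Noetherianity of $\im \phi$ is equivalent to $\phi([Y])$ being well-quasi-ordered under $\Inc$-divisibility, and transferring this wqo to $[Y]$ via $\phi$ should force $\LT(\ker \phi)$ to have only finitely many $\Inc$-orbits of minimal generators. Combined with $\Inc$-Noetherianity of $K[X]$ controlling the $X$-leading-term part of the Gr\"obner basis of $J$, this yields finiteness of $H$.

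The main obstacle I anticipate lies precisely in this last transfer step: since $K[Y]$ itself is generally not $\Inc$-Noetherian, the finite generation of $\LT(\ker \phi)$ must come entirely from the pullback structure. Making this rigorous likely requires a Higman-type wqo lemma to control minimal $\Inc$-generators through $\phi$, while accounting for the fact that fibers of $\phi$ may be infinite and that the tiebreaker must be chosen so the selection of fiber-minima is $\Inc$-coherent (otherwise the bijection between standard monomials and $\phi([Y])$ need not commute with the $\Inc$-action in the simple way required).
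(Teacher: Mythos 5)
Your construction has a genuine gap exactly at the step you flag as the ``main obstacle,'' and that obstacle is not a technicality that a Higman-type lemma will repair. Pulling the order back through $\phi$ itself makes the standard monomials of $\ker\phi$ the tiebreaker-minimal elements of the $\phi$-fibers, and these do biject with $\phi([Y])$; but $\Inc$-Noetherianity of $\im\phi$ (equivalently, well-quasi-ordering of $\phi([Y])$ under $\Inc$-divisibility) says nothing about $\Inc$-divisibility among the \emph{non-standard} monomials upstairs. If $u_1,u_2,\dots$ are non-standard monomials in $[Y]$, knowing $\phi(u_i)\,|_{\Inc}\,\phi(u_j)$ for some $i<j$ does not produce $u_i\,|_{\Inc}\,u_j$, because $\phi$ collapses fibers (which can be infinite, e.g.\ $\phi(y_{ij})=x_i$) and divisibility does not lift along it. Since $K[Y]$ is in general not $\Inc$-Noetherian, an $\Inc$-invariant monomial ideal such as $\LT(\ker\phi)$ need not be $\Inc$-finitely generated, so its finiteness must be \emph{proved}, not inferred from a wqo transfer; indeed the Hillar--Sullivant counterexample shows no wqo statement of the needed generality is available on $[Y]$. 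Your elimination step has a related circularity: the graph ideal $\ideal{y-\phi(y)}_{\Inc}$ is itself a symmetric toric ideal of exactly the type in question, so invoking a finite equivariant Gr\"obner basis for it inside the existence proof assumes what is to be shown (the paper uses the graph only for the \emph{computation}, after existence is established). There is also a smaller issue with the order itself: a descending chain whose $\phi$-images stabilize lands in a single, possibly infinite, fiber, so the tiebreaker must be a well-order on infinite fibers that simultaneously respects multiplication and $\Inc$; this is fixable, but needs an argument.

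The paper gets around precisely this transfer problem by never pulling back through $\phi$ directly. It first normalizes $Y$ and factors $\phi=\psi\circ\pi$ through the matching monoid ring $K[Z]$, and pulls the order back through $\pi$ (whose fibers are finite), with a reverse lexicographic tiebreaker. Finiteness then comes from three quantitative ingredients that replace your hoped-for wqo argument: a degree bound $2\max_p k_p-1$ on a Gr\"obner basis of $\ker\pi$ (Proposition \ref{prop:gb-of-phi}, using the row/column-sum characterization of $\mm$ in Lemma \ref{lem:mm}); $\Inc$-Noetherianity of $\im\pi$ (not $\im\phi$) giving a finite equivariant Gr\"obner basis $G$ of $\ker\psi\cap\im\pi$; and the lifting lemma (Lemma \ref{lem:lift}) showing every needed element of $\Inc G$ lifts to a binomial in $K[Y]$ of degree at most $5\norm{A-B}$, so that the resulting Gr\"obner basis of $\ker\phi$ has bounded degree and hence, by Proposition \ref{prop:bounded-deg}, finitely many $\Inc$-orbits. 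Your proposal contains no analogue of the degree-bounded lifting or of the bounded-degree basis of the ``within-fiber'' kernel, and without the intermediate factorization it is unclear how to supply them.
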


A key strategy to studying $\phi$ used in \cite{draisma2013noetherianity} was working with a certain factorization of the map, and we make use of this strategy here as well.

Let $N$ be the number of $\Sym$-orbits in $Y$, and for the $p$th orbit choose a representative $y_p$ with minimum width and let $k_p = w(y_p)$.  Define a new variable set
 \begin{equation}\label{yprime}
 Y' := \{y'_{p, (\alpha_1,\ldots,\alpha_{k_p})} \mid p \in [N],\; \alpha_1,\ldots,\alpha_{k_p} \in \B N \text{ distinct } \}
 \end{equation}
with $\Sym$ action on $Y'$ by
 \[ \sigma y'_{p, (\alpha_1,\ldots,\alpha_{k_p})} = y'_{p, (\sigma(\alpha_1),\ldots,\sigma(\alpha_{k_p}))}.\]
$Y'$ has the same number of $\Sym$-orbits as $Y$ and minimal-width orbit representatives with the same widths, but $Y'$ has a standard form we will take advantage of.  There is a surjective $\Sym$-equivariant map $\theta:Y' \to Y$ defined by $y'_{p, (1,\ldots,k_p)} \mapsto y_p$, and this map extends to monomial map $\theta: K[Y'] \to K[Y]$.

Define variable set
 \[ Z := \{z_{p,i,j} \mid p \in [N], i \in [k_p], j \in \B N\}\]
with $\Sym$ action on the last index $\sigma z_{p,i,j} = z_{p,i,\sigma(j)}$.  Define $\Sym$-equivariant monomial map $\pi:K[Y'] \to K[Z]$ by
 \[ \pi: y'_{p, (\alpha_1,\ldots,\alpha_{k_p})} \mapsto \prod_{i \in [k_p]} z_{p,i,\alpha_{i}}. \]
Note that this map $\pi$ depends only on the values of $k_1,\ldots,k_p$.  The important fact is that $\phi \circ \theta$ factors through the map $\pi$, so there is a $\Sym$-equivariant monomial map $\psi:K[Z] \to K[X]$ that makes the following diagram commute.
\[
\xymatrix{
K[Y'] \ar[r]^{\pi} \ar[d]^{\theta} & K[Z] \ar[d]^{\psi} \\
K[Y] \ar[r]^{\phi} & K[X]
}
\]

\begin{proposition}\label{prop:theta}
 If $G \subseteq K[Y']$ is a $\Inc$-equivariant Gr\"obner basis of $\ker(\phi \circ \theta)$ for $\Inc$-respecting monomial order $\leq'$, then $\theta(G) \subseteq K[Y]$ is a $\Inc$-equivariant Gr\"obner basis of $\ker \phi$ for a $\Inc$-respecting monomial order $\leq$ determined by $\leq'$.
\end{proposition}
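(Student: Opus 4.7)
The plan is to construct $\leq$ from $\leq'$ via a multiplicative, $\Inc$-equivariant section $s:[Y]\to[Y']$ of $\theta$, and then to use $s$ to lift leading-term divisibilities from $\ker(\phi\circ\theta)$ down to $\ker\phi$.

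For each variable $y\in Y$ the fiber $\theta^{-1}(y)\subseteq Y'$ is finite (of size at most $|\F S_{k_p}|$, where $p$ is the orbit index of $y$); set $s(y)$ to be the $\leq'$-smallest element of $\theta^{-1}(y)$ and extend multiplicatively to $s:[Y]\to[Y']$. Because $\leq'$ respects multiplication, $s(m)$ is the $\leq'$-smallest lift of $m$ for every $m\in[Y]$; because $\leq'$ respects $\Inc$ and $\Inc$ acts on fibers compatibly, $s(\rho m)=\rho s(m)$ for all $\rho\in\Inc$. Define $m\leq n$ in $[Y]$ iff $s(m)\leq' s(n)$; since $s$ is injective, multiplicative, and $\Inc$-equivariant, $\leq$ inherits from $\leq'$ the properties of being an $\Inc$-respecting monomial order on $K[Y]$. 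Extending $s$ $K$-linearly to a set map $K[Y]\to K[Y']$, we get $\theta\circ s=\mathrm{id}_{K[Y]}$, and because $s$ preserves the monomial order we have $\LTp(s(f))=s(\LT(f))$ for every $f\in K[Y]$.

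For the Gröbner property let $0\neq f\in\ker\phi$. Then $\tilde f:=s(f)\in\ker(\phi\circ\theta)$ with $\LTp(\tilde f)=s(\LT(f))$, so the Gröbner hypothesis on $G$ produces $g\in G$, $\rho\in\Inc$, and $m^*\in[Y']$ with $s(\LT(f))=m^*\cdot\rho\LTp(g)$. Call a monomial of $[Y']$ \emph{canonical} if it lies in $s([Y])$, equivalently if each of its variables is $\leq'$-minimum within its $\theta$-fiber. Canonicity is preserved under taking divisors (since $[Y']$ is a free monoid on variables and canonicity is a pointwise condition on variables) and under the $\Inc$-action (by $\Inc$-equivariance of $s$), so canonicity of $s(\LT(f))$ forces $\LTp(g)$ to be canonical. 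Writing $\nu:=\theta(\LTp(g))\in[Y]$, we then have $\LTp(g)=s(\nu)$, and applying $\theta$ to the divisibility equation yields $\rho\nu\mid\LT(f)$ in $[Y]$.

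The final step is to verify $\LT(\theta(g))=\nu$. For any monomial $m$ in the support of $g$ with $m\neq s(\nu)$ we have $m<'s(\nu)$, and the minimality of $s$ gives $s(\theta(m))\leq' m$, so $s(\theta(m))<'s(\nu)$ and hence $\theta(m)<\nu$. In particular no other monomial of $g$ maps to $\nu$ under $\theta$, so $\nu$ appears in $\theta(g)$ with the same nonzero coefficient it had as $s(\nu)$ in $g$, and is the $\leq$-maximum of its support. Combining with the previous paragraph, $\rho\LT(\theta(g))\mid\LT(f)$, which establishes the Gröbner basis property; that $\theta(G)$ also $\Inc$-generates $\ker\phi$ follows by applying $\theta$ to any $\Inc$-expansion of $\tilde f$ in terms of $G$. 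The main obstacle is this canonical-lift bookkeeping: arranging for $s$ to be simultaneously the $\leq'$-minimum, multiplicative, and $\Inc$-equivariant, so that canonicity is stable under both division and the $\Inc$-action and so that $\theta$ does not destroy leading terms through unexpected cancellations among the non-leading monomials of $g$.
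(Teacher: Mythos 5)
Your proposal is correct and follows essentially the same route as the paper: your section $s$ is the paper's $\nu$ (variable-by-variable $\leq'$-minimal lift), the order $\leq$ is pulled back along it, and the Gr\"obner property is transferred by observing that minimal (``canonical'') lifts are closed under division and that $\theta$ carries $\LTp g$ to $\LT \theta(g)$ because every other monomial of $g$ dominates the minimal lift of its image. The one place you are thinner than the paper is the $\Inc$-equivariance $s(\rho m)=\rho s(m)$ (equivalently, that $\leq$ respects $\Inc$), which you assert from ``$\Inc$ acts on fibers compatibly'': that compatibility is precisely what needs proof, and the paper establishes it by choosing $\sigma\in\Sym$ with $\sigma\rho=\mathrm{id}$ on $[w(a)]$, so that $\sigma\nu(\rho a)\in\theta^{-1}(a)$, and then comparing $\rho\nu(a)$ with $\nu(\rho a)$ using that $\leq'$ respects $\Inc$ — the claim is true, but it merits that argument (or one using minimality of the widths $k_p$) rather than an appeal to fiber compatibility.
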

\begin{proof}
 Let $\nu:K[Y] \to K[Y']$ be the right-inverse map of $\theta$ defined on variables $y \in Y$ by letting $\nu(y)$ be the $\leq'$-minimal variable in $\theta^{-1}(y)$.  Define order $\leq$ on $[Y]$ by letting $a \leq b$ if and only if $\nu(a) \leq' \nu(b)$.  It is clear that $\leq$ is a total well-order on $[Y]$, and it respects multiplication because $\nu$ is a homomorphism.  Fixing any $\rho \in \Inc$ and $a \in [Y]$ note that $\rho \nu(a)$ and $\nu(\rho a)$ are in the same fiber of $\theta$.  There exists $\sigma \in \Sym$ such that $\sigma \rho$ is the identity on $[w(a)]$ and so $\sigma \nu(\rho a) \in \theta^{-1}(a)$.  This implies $\nu(a) \leq' \sigma \nu(\rho a)$ and $\nu(\rho a) = \rho \sigma \nu(\rho a) \leq' \rho \nu(a)$.  Because $\leq'$ respects $\Inc$ it must be that $\nu(\rho a) = \rho \nu(a)$.  Therefore $\leq$ respects $\Inc$.
 
 Let $f$ be any non-zero polynomial in $\ker \phi$.  Then $\LT f = \theta \LTp \nu(f)$  This polynomial $\nu(f)$ is in $\ker(\phi \circ \theta)$ so there is $g \in \Inc G$ that reduces $\nu(f)$.  Since $\LTp g$ divides $\LTp \nu(f)$ which is minimal in its fiber, $\LTp g$ is a product of minimal variables so it is also minimal in its fiber.  Because $\LTp g$ is larger than the other monomials in $g$, it is certainly larger than the minimal representatives of the fibers of those monomials.  Therefore $\LT(\theta(g)) = \theta(\LTp g)$ so $\theta(g) \in \Inc \theta(G)$ reduces $f$.
\end{proof}

The above proposition implies that to prove Theorem \ref{theo:finGB} it is sufficient to prove it for maps of the form $\psi \circ \pi:K[Y'] \to K[X]$.  Therefore we can assume without loss of generality that $Y = Y'$ and $\phi$ factors as
\[ R[Y] \xrightarrow{\pi} R[Z] \xrightarrow{\psi} R[X]. \]

We now construct the monomial order $\leq$ on $K[Y]$ for which we will prove the existence of a finite equivariant Gr\"obner basis.
Let $\mm$ denote the monoid of monomials $\pi[Y] \subseteq [Z]$, called a {\em matching monoid}.  Choose an $\Inc$ respecting monomial order $\leq_1$ on $K\mm$ and an $\Inc$ respecting reverse lexicographic order $\leq_2$ on $K[Y]$.  Let $\leq$ be the monomial order on $K[Y]$ defined by $a < b$ if $\pi(a) <_1 \pi(b)$ or $\pi(a) = \pi(b)$ and $a <_2 b$.

We will first prove the existence of a finite $\Inc$-equivariant Gr\"obner basis of $\ker \pi$ for order $\leq$.  The following proposition will allow us to do so by bounding the degree of a Gr\"obner basis.

\begin{proposition}\label{prop:bounded-deg}
 Let $F \subset K[Y]$ be a set of binomials with degree bounded by $d$.  Then $F$ is contained in a finite number of $\Inc$-orbits.
\end{proposition}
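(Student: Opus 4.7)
The plan is to reduce each binomial of degree $\leq d$ to a canonical form whose appearing indices lie in $\{1,\ldots,M\}$ for some uniform $M = M(d)$, and then to observe that the set of such canonical forms is finite. Since every $\Inc$-orbit will contain exactly one such representative, this bounds the number of orbits meeting $F$.

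First I would bound the index usage. Setting $K := \max_{p \in [N]} k_p$, each variable $y'_{p,(\alpha_1,\ldots,\alpha_{k_p})}$ in $Y$ involves at most $K$ distinct indices of $\B N$. A binomial of degree $\leq d$ is a combination of two monomials each of degree $\leq d$, hence a product of at most $2d$ variables counted with multiplicity, and therefore uses at most $M := 2dK$ distinct indices of $\B N$.

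Next I would define the canonical form. For $f \in K[Y]$ of finite width, let $\alpha_1 < \alpha_2 < \cdots < \alpha_m$ enumerate the distinct indices appearing in $f$, and let $\tilde f$ be obtained from $f$ by the substitution $\alpha_i \mapsto i$. Then $f = \rho \tilde f$ for any $\rho \in \Inc$ extending $i \mapsto \alpha_i$, so $f \in \Inc \tilde f$. Conversely, for any $\rho' \in \Inc$ the appearing indices of $\rho' \tilde f$ are $\{\rho'(1) < \cdots < \rho'(m)\}$, and equating this set with $\{1,\ldots,m\}$ forces $\rho'|_{[m]}$ to be the identity. Hence $\tilde f$ is the unique member of its $\Inc$-orbit whose appearing index set is exactly $\{1,\ldots,m\}$, and in particular $\tilde f$ depends only on the orbit of $f$.

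Finally I would count. Each canonical representative of degree $\leq d$ uses only variables of $Y$ whose indices lie in $\{1,\ldots,M\}$, and there are only finitely many such variables (namely $\sum_{p=1}^{N} k_p!\binom{M}{k_p}$), hence only finitely many monomials of degree $\leq d$ in them, and therefore only finitely many binomials of the relevant form (tacitly restricting coefficients to the normalized pure-difference form $m_1-m_2$ standard in the toric setting). Since each $\Inc$-orbit of a degree-$\leq d$ binomial contains exactly one canonical representative, $F$ meets only finitely many $\Inc$-orbits. The argument is essentially bookkeeping; the one point requiring care is the uniqueness of the canonical form, which follows cleanly from the order-preserving nature of the $\Inc$-action on $\B N$.
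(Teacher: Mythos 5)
Your proof is correct and follows essentially the paper's route: both arguments bound the number of distinct indices appearing in a binomial of degree $\leq d$ by $2d\max_p k_p$ and then count representatives supported on an initial segment of $\B N$. The only (harmless) difference is that you normalize by an order-preserving relabeling, which yields an $\Inc$-representative directly (and even a unique canonical form, though only existence is needed for the count), whereas the paper relabels via $\Sym$ and then invokes the fact that every $\Sym$-orbit is a finite union of $\Inc$-orbits.
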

\begin{proof}
 Let $k$ be the maximum size of the index supports of variables in $Y$.  The index support of a polynomial is the union of the index supports of its variables, so a binomial of degree $\leq d$ has index support size bounded by $2kd$.  Any $f \in F$ has in its $\Sym$-orbit a binomial of width $\leq 2kd$.  There are only a finite number of pairs of monomials in $K[Y]_{2kd}$ with degree $\leq d$.  Therefore $F$ is contained in a finite number of $\Sym$-orbits, and every $\Sym$-orbit is the union of a finite number of $\Inc$-orbits.
\end{proof}

We also require the following useful characterization of the matching monoid $\mm$.
Any monomial in $[Z]$ can be expressed as $z^A$ with $A := (A_1,\ldots,A_p)$ where each $A_p$ is a $k_p$ by infinite matrix.

\begin{lemma}[Proposition 3.1 of \cite{draisma2013noetherianity}]\label{lem:mm}
 Monomial $z^A \in [Z]$ is in $\mm$ if and only if for each $p = 1,\ldots,N$ there is integer $d_p \geq 0$ such that all row sums of $A_p$ are equal to $d_p$ and all column sums are $\leq d_p$.
\end{lemma}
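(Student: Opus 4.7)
The plan is to treat each block $A_p$ independently (the statement is a conjunction over $p$) and view $A_p$ as an infinite but finitely-supported $k_p \times \B N$ matrix of non-negative integers. The forward direction is bookkeeping and the backward direction reduces to a classical combinatorial decomposition of bipartite multigraphs.

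First I would handle the easy direction. Suppose $z^A = \pi(m)$ for some $m \in [Y]$. Group the variables in $m$ by their $p$-index and write the $p$-part as $\prod_{s=1}^{d_p} y'_{p,(\alpha_1^{(s)},\ldots,\alpha_{k_p}^{(s)})}$ for some multiset of $d_p$ tuples with distinct entries within each tuple. Applying $\pi$, each factor contributes exactly one $z_{p,i,\alpha_i^{(s)}}$ per row $i$, so row $i$ of $A_p$ sums to $d_p$. For any column $\alpha$, at most one $s$ can have $\alpha_i^{(s)} = \alpha$ for a given $i$, and since the tuple entries are distinct, each tuple contributes at most one $1$ to column $\alpha$ in total; hence the column sum is $\leq d_p$.

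For the converse, fix $p$ and assume row sums equal $d_p$ and column sums are $\leq d_p$. The aim is to write $A_p = \sum_{s=1}^{d_p} P^{(s)}$, where each $P^{(s)}$ is the $0/1$ incidence matrix of an injection $[k_p] \hookrightarrow \B N$ (equivalently, a matching that saturates every row). I would model $A_p$ as a finite bipartite multigraph $G_p$ with left part $[k_p]$, right part equal to the finite support of the column index, and $A_p(i,j)$ parallel edges between $i$ and $j$. Every left vertex has degree $d_p$ and every right vertex has degree at most $d_p$, so the maximum degree of $G_p$ is $d_p$. By König's edge-coloring theorem for bipartite multigraphs, the edges of $G_p$ admit a proper edge-coloring using $d_p$ colors. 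Since each left vertex has degree exactly $d_p$, every color class touches every left vertex, i.e.\ is a perfect matching on $[k_p]$. Each color class $s$ therefore determines a tuple $(\alpha_1^{(s)},\ldots,\alpha_{k_p}^{(s)})$ of distinct columns, giving a variable $y'_{p,(\alpha_1^{(s)},\ldots,\alpha_{k_p}^{(s)})} \in Y'$, and $\pi$ applied to the product of these $d_p$ variables yields $z^{A_p}$. Taking the product over $p$ exhibits $z^A$ as $\pi$ of a monomial in $[Y']$.

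The hard part is the backward direction, and the main obstacle is finding the right decomposition device; once one recognizes the problem as edge-coloring of a bipartite multigraph with uniform left-degree, König's theorem is the precise tool. One has to be slightly careful that the graph is finite so the theorem applies directly (justified because $A_p$ has finite support) and that the "perfect matching" conclusion uses only that the left side has uniform maximum degree. Everything else is verification that the variables produced by the matchings lie in $Y'$, which is immediate from the distinctness of columns in each color class.
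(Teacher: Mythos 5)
Your argument is correct, but note that this lemma is not proved in the paper at all: it is imported verbatim as Proposition~3.1 of \cite{draisma2013noetherianity}, so there is no internal proof to compare against. What you have written is a valid self-contained proof. The forward direction is right, though the clause ``at most one $s$ can have $\alpha_i^{(s)}=\alpha$ for a given $i$'' is false as stated (two factors may well use the same column in the same position, e.g.\ $y'_{p,(1,2)}y'_{p,(1,3)}$); fortunately it is also unnecessary, since the second half of your sentence --- each tuple has distinct entries, hence contributes at most one to any fixed column, and there are $d_p$ tuples --- already gives the bound. For the converse, reducing to a finite bipartite multigraph with uniform left degree $d_p$ and maximum degree $d_p$, and invoking K\"onig's edge-coloring theorem (which does hold for bipartite multigraphs, chromatic index equal to maximum degree), correctly produces $d_p$ color classes each of which saturates every row vertex, i.e.\ $A_p=\sum_{s=1}^{d_p}P^{(s)}$ with each $P^{(s)}$ the incidence matrix of an injection $[k_p]\hookrightarrow\B N$; each such class is exactly a variable of $Y'$, and multiplying them recovers $z^{A_p}$. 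The argument usually given for this statement (and the one in the cited source) extracts one row-saturating matching at a time via Hall's theorem and induces on $d_p$; your edge-coloring formulation packages that same induction into a single appeal to K\"onig, which is slightly slicker but equivalent in content. Either way the lemma stands, and the $d_p=0$ and multi-block ($N>1$) cases are trivially handled as you say.
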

A consequence is that for $z^A,z^B \in \mm$, $z^A$ divides $z^B$ in $\mm$ (meaning there is a quotient $z^C \in \mm$) if and only if $A \leq B$ entry-wise and for each $p = 1,\ldots,N$ every column sum of $B_p-A_p$ is bounded by $d'_p - d_p$ where $d'_p$ and $d_p$ are the row sums of $B_p$ and $A_p$ respectively.

\begin{proposition}\label{prop:gb-of-phi}
The kernel of $\pi$ has a Gr\"obner basis for order $\leq$ consisting of binomials of degree at most $2 \max_p k_p - 1$.
\end{proposition}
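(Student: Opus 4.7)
The plan is to show that every minimal generator (under ordinary divisibility in $[Y]$) of the initial ideal $\LT_{\leq}(\ker\pi)$ has total degree at most $2k-1$, where $k := \max_p k_p$. Together with the fact that $\ker\pi$ is toric, this yields a Gr\"obner basis of binomials of the stated degree: for each minimal leading monomial $y^u$ take $y^v$ to be the $\leq_2$-minimum of the $\pi$-fiber $\pi^{-1}(\pi(y^u))$, and add $y^u - y^v$ to the basis.

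The first step is a leading-term characterization. For any binomial $y^u - y^v \in \ker\pi$ the equality $\pi(y^u) = \pi(y^v)$ forces a tie under $\leq_1$, so the leading term under $\leq$ is the $\leq_2$-larger of $y^u$ and $y^v$. Hence a monomial $y^u$ lies in $\LT_{\leq}(\ker\pi)$ if and only if $y^u$ is not the $\leq_2$-minimum element of its $\pi$-fiber. Minimality of $y^u$ as a generator of the initial ideal then becomes a purely combinatorial statement: no proper divisor $y^{u'} \mid y^u$, $y^{u'} \neq y^u$, fails to be the $\leq_2$-minimum in its own $\pi$-fiber.

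Suppose for contradiction that $y^u$ is a minimal generator with $\deg(y^u) \geq 2k$, and let $y^v$ be the $\leq_2$-minimum of $\pi^{-1}(\pi(y^u))$, so $y^u >_2 y^v$ and $u \neq v$. Translating via Lemma~\ref{lem:mm}, I view $u$ and $v$ as multisets $U = \bigsqcup_p U_p$ and $V = \bigsqcup_p V_p$ of tuples in $\B N^{k_p}$ with identical column profiles on each type $p$; since $u\neq v$ some type $p$ has $U_p \neq V_p$, and there $|U_p| = |V_p| \geq 2$. I would extract a proper nonempty sub-pair $U' \subseteq U_p$, $V' \subseteq V_p$ with the same column profile and $|U'|+|V'| \leq 2k_p \leq 2k$ by a cycle-finding argument: starting from a tuple $\alpha \in U_p \setminus V_p$, alternately follow coincidences of coordinate values to tuples of $V_p$ and back to $U_p$; since each tuple has only $k_p$ coordinates, some tuple (hence a closed cycle) must repeat within $2k_p$ steps, producing a balanced sub-pair of total degree at most $2k-1$. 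The corresponding sub-binomial $y^{u'} - y^{v'}$ lies in $\ker\pi$ and $y^{u'}$ properly divides $y^u$.

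The main obstacle is that the extraction a priori yields only a binomial relation, not a guarantee that $y^{u'} >_2 y^{v'}$; a priori one could have $y^{u'} <_2 y^{v'}$, in which case $y^{u'} \notin \LT_\leq(\ker\pi)$ and nothing is gained. The plan to handle this is a swap argument leveraging the reverse-lexicographic tie-breaker: given the balanced sub-pair, consider the binomial $(y^u/y^{u'}) y^{v'} - y^v$, which also lies in $\ker\pi$; if $y^{u'} <_2 y^{v'}$ then $(y^u/y^{u'}) y^{v'} >_2 y^u$ since $\leq_2$ respects multiplication, and one can iterate or invoke a minimal-counterexample argument to extract a proper divisor of $y^u$ that is not $\leq_2$-minimal in its fiber. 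Combined with the cycle extraction, this contradicts the assumed minimality of $y^u$ as a generator of $\LT_\leq(\ker\pi)$ and completes the degree bound.
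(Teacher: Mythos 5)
Your opening reductions are fine: on each fiber of $\pi$ the order $\leq$ restricts to $\leq_2$, so $\LT(\ker\pi)$ is spanned by the monomials that are not $\leq_2$-minimal in their fiber, and bounding the minimal generators of $\LT(\ker\pi)$ by $2\max_p k_p-1$ would indeed prove the proposition. The gap is in the extraction step. You only look for a witness among sub-multisets of $V$, i.e.\ you claim that any balanced pair $(U_p,V_p)$ with $U_p\neq V_p$ contains a proper nonempty balanced sub-pair of size bounded in terms of $k_p$; and your justification (``some tuple must repeat within $2k_p$ steps'') is not valid, since the alternating walk can visit a new tuple at every step -- repetition is only forced after a number of steps comparable to $\deg u$, not to $k_p$. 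In fact the claim itself is false: already for $N=1$, $k=2$, where $\pi(y'_{(\alpha,\beta)})=z_{1,\alpha}z_{2,\beta}$, the binomial
\[ y'_{(1,2)}y'_{(3,4)}\cdots y'_{(2m-1,2m)} \;-\; y'_{(3,2)}y'_{(5,4)}\cdots y'_{(2m-1,2m-2)}y'_{(1,2m)} \]
lies in $\ker\pi$ for every $m$, and it admits no proper nonempty balanced sub-pair: the two supports together form a single $2m$-cycle on the index values (alternating between the two sides), each value is incident to exactly one tuple from each side, so any balanced sub-pair propagates around the whole cycle. Thus primitive binomials of $\ker\pi$ have unbounded degree, and no repair of the cycle extraction (nor the subsequent swap argument, which presupposes the extraction) can work.

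What makes the proposition true is precisely the freedom you forbid yourself: the witness showing that a small divisor of $y^u$ is not minimal in its fiber may use variables occurring in neither $u$ nor $v$. In the example above, $y'_{(1,2)}y'_{(3,4)}$ already fails to be extremal in its own fiber because of $y'_{(1,4)}y'_{(3,2)}$, and $y'_{(1,4)}$ appears on neither side of the long binomial, so your scheme cannot see it. The paper's proof exploits this by building a descending chain $v=v_0>v_1>\cdots>u$ inside the fiber, where each step modifies at most $2\max_p k_p-1$ variables of $v$: one takes the $\leq$-smallest variable $y_J$ of the standard monomial $u$, uses the divisibility criterion of Lemma~\ref{lem:mm} to see that at most $k-1$ additional variables of $v$ suffice to remove the column-sum obstructions to dividing $\pi(y_J)$ out of the image, and replaces that block of at most $2k-1$ variables by $y_J$ times an arbitrary preimage of the quotient. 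The reverse lexicographic tie-break enters exactly here: since $u<v$ and $y_J$ is smaller than every variable of $v$, introducing $y_J$ strictly decreases the order, so the chain terminates at $u$ and each step is a reduction by a kernel binomial of degree at most $2\max_p k_p-1$. Some argument of this ``reduce through new variables'' type is unavoidable, so the proposal as it stands does not prove the statement.
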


The argument of the following proof is due to Jan Draisma, originally used to show a degree bound on generators of $\ker \pi$.  We adapt it to Gr\"obner bases.

\begin{proof}
Given monomial $v \in [Y]$, let $u$ be the minimal monomial with $\pi(u) = \pi(v)$, i.e. the standard monomial of $\ker \pi$ in the equivalence class of $v$.  It suffices to show that there exists a chain $v=v_0 > v_1 > \ldots > v_t=u$ such that $\pi(v_s)=\pi(u)$ for all $s$ and $(v_s - v_{s+1})/\gcd(v_s,v_{s+1})$ has degree at most $2 \max_p k_p - 1$.  We first consider the case $N=1$ and drop the index $p$ for simplicity of notation.

Proceed by induction on the degree of $v$.  Suppose $v$ and $u$ have a variable $y_{J}$ in common.  By the induction hypothesis there is a chain from $v/y_{J}$ to $u/y_{J}$ satisfying the desired conditions, since $u/y_{J}$ is also a standard monomial.  Multiplying by $y_J$ gives a chain from $v$ to $u$.

Assume then that $v$ and $u$ have no variables in common.  Let $z^A = \pi(v) = \pi(u)$.  Let $y_{J}$ be the smallest variable in $u$ and $z^B = \pi(y_{J})$.  The matrix $B$ has $k$ non-zero entries and row sums equal to 1.  Since $A \geq B$, monomial $v$ has a divisor $v'$ of degree $e \leq k$ such that $\pi(v')=:z^{A'}$ and $A' \geq B$.  The row sums of $A'$ are equal to $e$ and let $S$ be the set of indices for which the column sum of $A'$ is also equal to $e$.  Abusing notation let $J$ also denote the set of column indices where $B$ is non-zero.  By Lemma \ref{lem:mm}, $z^B$ divides $z^{A'}$ in $\mm$ if and only if $S$ is contained in $J$, because then all column sums of $A'-B$ will be $\leq e-1$.  The column indices $S\setminus J$ are the obstacles to divisibility, but there are few of them.  The total entry sum of $A'$ is $ek$, and the columns of $J$ make a non-trivial contribution to the sum, so then $|S\setminus J| < k$.

For each $j \in S \setminus J$, the $j$th column sum of $A$ is strictly less than $\deg u$ because $y_J$ does not contribute.  Therefore $v$ also has a variable $y_{L_j}$ with $j$ not among the entries of $L_j$.  Let $w$ be the product of the set of variables of the form $y_{L_j}$ for some $j \in S \setminus J$ and let $v'' = v'w$.  Letting $z^{A''} = \pi(v'')$, the set of indices for which the column sum of $A''$ is equal to $\deg v''$ is contained in $J$. Therefore $A'' - B$ has all column sums $\leq \deg v'' - 1$ and so $z^{A''-B} \in \mm$.  Choose any monomial $v_1'' \in \pi^{-1}(z^{A'-B})$, and let $v_1 = v\frac{v''_1 y_J}{v''}$.  By construction, $\pi(v_1)=\pi(v)$ and $v_1$ shares the variable $y_{J}$ with $u$.  Note that since $u < v$ and $\leq$ is reverse lexicographic on each fiber of $\pi$, every variable in $v$ is larger than $y_{J}$, so $v > v_1$ as well.  Since $v_1$ and $u$ share a variable, by the induction hypothesis there is a chain from $v_1$ to $u$ satisfying the desired conditions.  The degree of $(v-v_1)/\gcd(v,v_1)$ is bounded by $\deg v''$ which is $\leq 2k-1$.

For the case $N > 1$ group $v$ and $u$ by their variable orbits as $v = v_1\cdots v_N$ and $u = u_1\cdots u_N$.  By the above, each $v_p$ can be reduced to $u_p$ with reductions by Gr\"obner basis elements of degree $\leq 2 \max_p k_p - 1$.  Applying these reductions in sequence brings $v$ to $u$.
\end{proof}

Let $F$ denote such a finite $\Inc$-equivariant Gr\"obner basis of $\ker \pi$.  We also know there exists $G$, a finite $\Inc$-equivariant Gr\"obner basis of $\ker \psi \cap \im \pi$ with respect to $\leq_1$, because $\im \pi$ is $\Inc$-Noetherian by Theorem \ref{thm:DEKL}.  The goal is to combine $F$ with a ``lift'' of $G$ to form a Gr\"obner basis of $\ker \phi$, and then show that this Gr\"obner basis has bounded degree.

\begin{lemma}\label{lem:lift}
 Let $z^A,z^B$ be any pair of monomials in $\mm$, and $u$ any monomial in $\pi^{-1}(z^A)$.  There exists monomial $v \in \pi^{-1}(z^B)$ such that
  \[ \degtot \frac{u - v}{\gcd(u,v)} \leq 5\norm{A-B}. \]
\end{lemma}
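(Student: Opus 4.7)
The plan is to reduce to the single-orbit case $N = 1$, where $\pi$, $\mm$, and the $\ell_1$-norm $\norm{\cdot}$ all decompose across orbits, and then to build $v$ from $u$ by a bounded sequence of transversal replacements, accumulating the $y$-variable changes to control $\degtot\frac{u-v}{\gcd(u,v)}$. With $N = 1$ and transversal size $k$, I regard $u$ as a multiset of $d_A$ transversals, one per $y$-variable factor, so $\pi(u) = z^A$ records the sum of their characteristic matrices. The argument proceeds by induction on $\norm{A-B}$; the base case $A = B$ is handled by $v = u$.

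For the inductive step when the row sums satisfy $d_A = d_B$, any row $i$ where $A_{i,\cdot} \neq B_{i,\cdot}$ must contain columns $j_+, j_-$ with $A_{i,j_+} > B_{i,j_+}$ and $A_{i,j_-} < B_{i,j_-}$. I pick a transversal $T$ of $u$ with $T_i = j_+$ (which exists since $A_{i,j_+} \geq 1$) and let $T'$ agree with $T$ outside row $i$ while satisfying $T'_i = j_-$. If $T'$ is a legitimate transversal, i.e., $j_-$ is not already used by $T$ in another row, then swapping $y_T$ for $y_{T'}$ in $u$ decreases $\norm{A-B}$ by $2$ at the cost of a single $y$-variable. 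If this simple swap is blocked because $j_-$ coincides with $T_{i^*}$ for some other row $i^*$, I would invoke a second transversal of $u$ and use the column-sum inequality from Lemma \ref{lem:mm} to construct a paired swap that still reduces $\norm{A-B}$ by a definite amount while modifying only a small bounded number of $y$-variables. The worst-case cost of these paired swaps is what produces the constant $5$ in the final bound.

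For $d_A \neq d_B$, say $d_A < d_B$, I would first equalize the row sums. Summing row-wise absolute differences gives the inequality $\norm{A-B} \geq k|d_A - d_B|$, so the row-sum discrepancy is cheap relative to $\norm{A-B}$. I would select $d_B - d_A$ transversals lying inside the excess matrix $(B-A)_+$ and append the corresponding $y$-variables to $u$; each such transversal lowers $\norm{A-B}$ by up to $k$ per $y$-variable introduced. The case $d_A > d_B$ is handled symmetrically by removing $y$-variables of $u$ whose transversals lie predominantly in $(A-B)_+$. The main obstacle I anticipate is showing that a legitimate replacement or removal transversal always exists: one must simultaneously respect the matching-monoid column-sum constraints of Lemma \ref{lem:mm} and the requirement that any removed transversal actually occur among $u$'s factors. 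Once these existence statements are secured by a case analysis on the row-sum inequalities of $\mm$, summing the per-step $y$-variable costs across the induction yields the stated bound $\degtot\frac{u-v}{\gcd(u,v)} \leq 5\norm{A-B}$.
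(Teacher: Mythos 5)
Your strategy---an induction on $\norm{A-B}$ by local transversal swaps---is genuinely different from the paper's argument, but the two steps you defer to ``a case analysis'' are exactly where the difficulty of the lemma sits, and one of them fails as stated. For the row-sum equalization you propose appending $d_B-d_A$ variables whose transversals lie inside the support of the excess $(B-A)_+$; such a transversal need not exist. Take $k=3$, $A$ with rows $(0,1,1,0),(0,1,1,0),(2,0,0,0)$ and $B$ with rows $(1,1,1,0),(1,1,1,0),(1,0,0,2)$: both satisfy the row/column conditions of Lemma \ref{lem:mm} with $d_A=2$, $d_B=3$, yet $(B-A)_+$ has row supports $\{1\},\{1\},\{4\}$, so rows $1$ and $2$ compete for the single column $1$ and no system of distinct column indices exists inside the excess. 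You would then have to append a variable that increases the discrepancy in some rows, and no accounting is given that the process still makes definite progress. In the blocked-swap case with $d_A=d_B$ you assert a ``paired swap'' of bounded cost that reduces $\norm{A-B}$, but the natural repairs can be norm-neutral (exchanging the entries $j_+$ and $j_-$ inside the same variable lowers row $i$'s contribution by $2$ while possibly raising row $i^*$'s by $2$), and a genuine repair may have to involve several variables, potentially on the order of $k$, per unit decrease of $\norm{A-B}$. Since the constant $5$ in the lemma must be independent of $k$, you would need to show the amortized number of variable changes per unit decrease of $\norm{A-B}$ is bounded by an absolute constant, and additionally that intermediate changes do not accumulate in $u/\gcd(u,v)$ and $v/\gcd(u,v)$; neither is established in your sketch.

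For comparison, the paper's proof avoids any move-by-move induction: it scans the $n=\deg u$ variables of $u$ once, discards only those causing one of three explicit obstructions to divisibility of $\pi(u')$ into $z^B$ (an entry already exceeding $B$, a column skipped too often relative to $B$, or index beyond $m=\deg\pi^{-1}(z^B)$), uses the column-sum criterion of Lemma \ref{lem:mm} to conclude that the surviving factor $u'$ satisfies $\pi(u')\mid z^B$ in $\mm$, and then completes $v=u'v''$ with an arbitrary preimage $v''$ of $z^B/\pi(u')$. Each obstruction count is bounded in terms of $\norm{A-B}$ (using $k|n-m|\le\norm{A-B}$), and the constant $5$ falls out of summing these three bounds. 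To salvage your approach you would need a lemma of the form: for any $z^A,z^B\in\mm$ with $A\neq B$, the monomial $u$ admits a modification changing at most $c$ variables, $c$ an absolute constant, that strictly decreases $\norm{A-B}$ by an amount proportional to the number of variables changed---and that statement, not the bookkeeping around it, is the real content you have left unproved.
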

\begin{proof}
  First we consider the case $N=1$ and drop the index $p$ for simplicity of notation.  We will prove that $u$ has a large factor $u'$ such that $\pi(u') \mid z^B$.  Then $v$ can be chosen to be $v = u'v''$ where $v''$ is any monomial in $\pi^{-1}(z^B/\pi(u'))$.  The sizes of $v'' = v/u'$ and $u'' = u/u'$ will be small.
  
  Let $n = \deg u$ and $m = \deg \pi^{-1}(z^B)$.  Express $u$ as the product of a sequence of variables $u = \prod_{i=1}^{n} y_{J_i}$, and let $u_j$ denote the partial product $u_j = \prod_{i=1}^{j} y_{J_i}$.  Let $A_j$ be the exponent matrix of $\pi(u_j)$ noting that $A_n = A$.  For any exponent matrix $C$, $C_{+\ell}$ will denote the $\ell$th column sum of $C$.  We will throw out variables $y_{J_i}$ which cause obstructions to $\pi(u)$ dividing $z^B$.  Let $L$ be the set of all $j \in [n]$ such that none of the following are true.
   \begin{enumerate}
    \item[(1)] $j > m$,
    \item[(2)] the index sequence $J_j = (\alpha_1,\ldots,\alpha_k)$ has an element $\alpha_i$ such that $(A_j)_{i,\alpha_i} > B_{i,\alpha_i}$,
    \item[(3)] there exists $\ell$ not in the index sequence $J_j$ such that $j - (A_j)_{+\ell} > m - B_{+\ell}$.
   \end{enumerate}
  As $j$ increases, $(A_j)_{i,\ell}$ increases exactly for $y_{J_j}$ such that $\ell$ is the $i$th index of $J_j$, and $j - (A_j)_{+\ell}$ increases exactly when $\ell$ does not appear in $J_j$.  Letting $u' := \prod_{i \in L}  y_{J_i}$ and $A'$ be the exponent matrix of $\pi(u')$, we have $A' \leq B$ and $\deg(u') - A'_{+\ell}\leq m - B_{+\ell}$ for all $\ell$.   By Lemma \ref{lem:mm} $\pi(u')$ divides $z^B$ in $\mm$.
  We will bound $|L| = \deg(u')$ by bounding the number of values $j \in [n]$ that satisfy each case above.
  
  {\em Case (1):} The number of values of $j \in [n]$ with $j > m$ is bounded by $|n-m|$.
  
  {\em Case (2):} The number of values of $j \in [n]$ satisfying condition (2) is bounded by
   \[ \sum_{i,\ell} |A_{i,\ell} - B_{i,\ell}| = \norm{A - B}. \]
  
  {\em Case (3):} Here we consider $j$ satisfying condition (3) only for $j \leq m$.  Let $n' = \min(m,n)$.  The number of such values of $j$ is bounded by
   \[ \sum_\ell \max\{0, B_{+\ell} - (A_{n'})_{+\ell}\} + n' - m \leq \norm{A_{n'} - B} \]
   \[ \leq \norm{A-B} + \norm{A_{n'} - A} \leq \norm{A-B} + k|n-m|.\]
  The sum of all entries in $A$ is $kn$ and for $B$ is $km$ which shows that $k|n-m| \leq \norm{A-B}$.  Therefore \[\deg (u'') = n-|L| \leq (3+1/k)\norm{A-B},\] \[\deg(v'') = m-|L| \leq n-|L| - |n-m| \leq (3+2/k)\norm{A-B}.\] So then
   \[ \deg \frac{u - v}{\gcd(u,v)} \leq \deg(u'' - v'') \leq (3+2/k)\norm{A-B} \leq 5\norm{A-B}. \]
  
  For $N >1$, for each $p \in [N]$ construct $v_p \in \pi^{-1}(y^{B_p})$ such that $\deg_p \frac{u_p - v_p}{\gcd(u_p,v_p)} \leq 5\norm{A_p-B_p}$.  Then take $v = v_1\cdots v_N$.
\end{proof}

With some additional care, the factor 5 in the bound can be significantly improved but a more precise bounded isn't needed here.

Let $C$ be the maximum value of $\norm{A-B}$ over all binomials $z^A - z^B$ in $G$.  Let $\C G$ be the set of all binomials $f$ in $K[Y]$ such that $\pi(f) = \sigma wg$ for some $\sigma \in \Inc$, $w \in \mm$ and $g \in G$, and such that $\degtot f \leq 5C$.  In other words $\C G$ consists of all lifts of elements of $\Inc G$ of bounded degree.  Finally let $\C H = \Inc F \cup \C G$.
\begin{proposition}
 $\C H$ is a Gr\"obner basis of $\ker \phi$ with respect to $\leq$.
\end{proposition}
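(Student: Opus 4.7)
My plan is to prove $\LT(\ker\phi) \subseteq \langle \LT\mathcal{H}\rangle_{\Inc}$ by well-founded induction on leading monomials with respect to the well-order $\leq$ on $[Y]$. For the inductive step I take a nonzero $f \in \ker\phi$ with $u := \LT f$ and decompose $f = f_{\mathrm{top}} + f_{\mathrm{rest}}$, where $f_{\mathrm{top}}$ gathers all terms of $f$ whose $\pi$-image equals $\pi(u)$.

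The easy case is when $\pi(f_{\mathrm{top}}) = 0$: then $f_{\mathrm{top}} \in \ker\pi$ and $\LT f_{\mathrm{top}} = u$, so Proposition \ref{prop:gb-of-phi} (giving $F$ as an $\Inc$-equivariant Gr\"obner basis of $\ker\pi$ for $\leq$) immediately yields $u \in \langle \LT F\rangle_{\Inc} \subseteq \langle \LT\mathcal{H}\rangle_{\Inc}$. Otherwise $\pi(f_{\mathrm{top}}) \neq 0$, so the $\leq_1$-leading monomial of $\pi(f)$ is $\pi(u) =: z^A$. Since $\pi(f) \in \ker\psi \cap \im\pi$ and $G$ is a $\leq_1$-Gr\"obner basis of this ideal (which is $\Inc$-Noetherian by Theorem \ref{thm:DEKL}), I can find $\sigma \in \Inc$, $g_0 \in G$, and $w \in \mm$ with $w \cdot \sigma(\initial_{\leq_1} g_0) = z^A$. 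Writing $\sigma g_0 = z^{A'} - z^{A''}$ then produces the binomial $w\sigma g_0 = z^A - z^{A'''} \in \ker\psi \cap \im \pi$ with $\norm{A-A'''} = \norm{A'-A''} \leq C$.

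I then apply Lemma \ref{lem:lift} with source $u \in \pi^{-1}(z^A)$ and target $z^{A'''}$ to obtain $v \in \pi^{-1}(z^{A'''})$ for which $h := (u-v)/\gcd(u,v)$ has total degree at most $5\norm{A-A'''} \leq 5C$. The inequality $\pi(u) >_1 \pi(v)$ forces $u >_{\leq} v$, and this is preserved after dividing by the common factor, so $\LT h = u/\gcd(u,v) =: u'$, a divisor of $u$ in $[Y]$. If $\gcd(u,v) = 1$ then $h = u - v$ has $\pi(h) = z^A - z^{A'''} = w\sigma g_0$, so $h \in \mathcal{G}$ directly and $u = \LT h \in \langle \LT\mathcal{H}\rangle_{\Inc}$. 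Otherwise $u' <_{\leq} u$ (since $\leq$ refines the divisibility order) and $h \in \ker\phi$ has $\LT h = u'$, so the inductive hypothesis gives $u' \in \langle \LT\mathcal{H}\rangle_{\Inc}$, and the relation $u' \mid u$ in $[Y]$ lifts this to $u \in \langle \LT\mathcal{H}\rangle_{\Inc}$. The hard part will be confirming that the lift $h$ lies in $\mathcal{G}$ precisely when no gcd-cancellation occurs: a reduced binomial $(u-v)/\gcd(u,v)$ need not be a single $\mm$-multiple of an element of $\Inc G$, but any such cancellation strictly decreases the leading monomial, so the well-founded induction absorbs this case without any additional structural argument.
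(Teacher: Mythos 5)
Your proof is correct, and its core ingredients coincide with the paper's: the same sets $F$, $G$, $\C G$, the same constant $C$, Proposition \ref{prop:gb-of-phi} for the $\ker\pi$ part, and Lemma \ref{lem:lift} to lift a $G$-reduction of $\pi(f)$ to a binomial of degree at most $5C$. The organizational difference is real, though: the paper argues directly that every monomial $u$ with $u\in\LT\ker\phi$ is reducible by $\C H$, splitting on whether $\pi u$ is a standard monomial of $\ker\psi$, and in the nonstandard case it simply asserts that the reduced binomial $u'-v'=(u-v)/\gcd(u,v)$ lies in $\C H$; under a literal reading of the definition of $\C G$ this requires $\pi(u'-v')$ to be an $\mm$-multiple of an element of $\Inc G$, which is not obvious once $\pi(\gcd(u,v))$ has been divided out (the quotient of the multiplier need not even lie in $\mm$). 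Your well-founded induction on the leading monomial absorbs exactly this case: when $\gcd(u,v)=1$ the lift is literally in $\C G$, and when it is not, the lift $h\in\ker\phi$ has leading monomial $u'$ strictly smaller than $u$ and dividing $u$, so membership of $u'$ in the monomial ideal $\ideal{\LT\C H}_\Inc$ propagates to $u$. So your route buys a watertight justification of the one step the paper states without comment, at the cost of the induction scaffolding; the paper's version is shorter but implicitly relies on the looser reading of $\C G$ as ``all lifts of multiples of elements of $\Inc G$ of bounded degree.'' The rest of your argument (that $\LT_{\leq_1}\pi(f)=\pi(u)$ when the top fiber does not cancel, that $\norm{A-A'''}\leq C$, and that $\LT h=u'$ because $\leq$ is compatible with $\pi$ and with division by the common factor) checks out.
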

\begin{proof}
 It's clear that $\C H \subseteq \ker \phi$.  We will show that any $u \in [Y]$ which is not a standard monomial of $\ker \pi$ can be reduced by $\C H$.
 
 First consider the case that $\pi u$ is not a standard monomial of $\ker \psi$.  There is a binomial $g \in G$ which reduces $\pi u$, which we will write as $g = a-b$ with $a$ the lead term.  There is $\sigma \in \Inc$, $c \in \mm$ such that $\sigma c a = \pi u$.  By Lemma \ref{lem:lift} there is $v \in \pi^{-1}(\sigma c b)$ such that $\deg(u' - v') \leq 5C$ where $u' = u/\gcd(u,v)$ and $v' = v/\gcd(u,v)$.  The binomial $u' - v'$ is contained in $\C H$ and it reduces $u$.  
 
 Otherwise $\pi u$ is a standard monomial of $\ker \psi$ but $u$ is not a standard monomial of $\ker \phi$.  Then $u$ is reduced by some element of $\Inc F$.
\end{proof}
 
By Proposition \ref{prop:bounded-deg}, $\C H$ is contained in a finite number of $\Inc$ orbits.  Letting $H$ be such a finite set of orbit representatives, $H$ is a $\Inc$ equivariant Gr\"obner basis of $\ker \phi$ with respect to the order $\leq$.  This concludes the proof of Theorem \ref{theo:finGB}.  It is not known if $\ker \phi$ has finite $\Inc$-equivariant Gr\"obner bases monomial orders other than those of the form in Theorem \ref{theo:finGB}.

\subsection{Computing equivariant Gr\"obner bases of toric ideals}

To compute a Gr\"obner basis of $\ker \phi$ from the description of $\phi$ we first assume that $Y = Y'$ for $Y'$ defined in \ref{yprime}.  A Gr\"obner basis of the graph of $\phi$, denoted $\Gamma_{\phi} \subseteq R[Y][X]$, is computed with respect to an elimination order for $X$.  We must prove that the graph has a finite $\Inc$-equivariant Gr\"obner basis with respect to such an elimination order.  Algorithm \ref{alg:truncBuch} then provides a way to compute the Gr\"obner basis.

$\Gamma_{\phi} := \ideal{y - \phi(y) \mid y' \in Y'}$ is itself a $\Sym$-invariant toric ideal.  It is the kernel of the monomial map $\phi':R[Y][X] \to R[X]$ defined by $\phi'(y^Ax^C) = \phi(y^A)x^C$.  Factoring $\phi'$ in the prescribed way produces
\[ R[Y][X] \xrightarrow{\pi'} R[Z][X] \xrightarrow{\psi'} R[X] \]
where $\pi'(y^Ax^C) = \pi(y^A)x^C$ and $\psi'(z^Bx^C) = \psi(z^B)x^C$.

The monoid order $\leq_1$ on $\im \pi$ can be extended to an order $\leq_1'$ on $\im \pi' = (\im \pi)[X]$ that eliminates $X$.  Define $\leq'$ to be the order on $[Y][X]$ such that $y^Ax^C < y^Bx^D$ if $\pi(y^A)x^C <_1' \pi(y^B)x^D$ or $\pi(y^A)x^C = \pi(y^B)x^D$ and $y^A <_2 y^B$.  The restriction of $\leq'$ to $[Y]$ is the hybrid order $\leq$ constructed previously from $\leq_1$ and $\leq_2$.  The order $\leq'$ eliminates $[X]$, and satisfies the hypotheses for Theorem \ref{theo:finGB} so there exists a finite $\Inc$-equivariant Gr\"obner basis $H$ for $\Gamma_\phi$ with respect to $\leq'$.  Using the above algorithm, $H$ can be explicitly computed from the $\Inc$-generators of $\Gamma_\phi$, which are
 \[ \{\sigma y_p - \phi(\sigma y_p) \mid p \in [N],\; \sigma \in \F S_{k_p}\} \]
where $y_1,\ldots,y_N$ are representatives of the $\Sym$-orbits of $Y$.
Then $G = H \cap R[Y]$ is a $\Inc$-equivariant Gr\"obner basis for $\ker(\phi) = \Gamma_{\phi} \cap R[Y]$ for the order $\leq$.

If $Y \neq Y'$, then we can compute an equivariant Gr\"obner basis $G$ for the kernel of $\phi\circ\theta:K[Y'] \to K[X]$ as above.
By Proposition \ref{prop:theta}, $\theta(G)$ is a $\Inc$-equivariant Gr\"obner basis for $\ker \phi$.

\bibliographystyle{alpha}
\bibliography{inftoric}
\end{document}